%
%
%
\documentclass{amsproc}
\usepackage[utf8]{inputenc}
  \usepackage{amsmath}
\usepackage{enumerate}
\usepackage{enumitem}
  \usepackage{amssymb}
  \usepackage{amsthm}
  \usepackage{bbm}
  \usepackage{bigints}
  \usepackage{esint}
  \usepackage{hyperref}
  \usepackage{bm}
  \usepackage{bbm}
  \usepackage{graphicx}
  \usepackage{color}
  \usepackage{todonotes}
\parskip=1.2\parskip
\usepackage{epsfig}
\usepackage{amsmath,amssymb}
\pagestyle{myheadings}
\hoffset - 0.5 in
\textheight=7.2in

\usepackage{comment}

\newtheorem{Th}{Theorem}[section]
\newtheorem{lem}[Th]{Lemma}

\theoremstyle{definition}

\newtheorem{Cor}[Th]{Corollary}
\newtheorem{Prop}[Th]{Proposition}

\theoremstyle{remark}

\numberwithin{equation}{section}


\newcommand{\tend}[3][]{\xrightarrow[#2\to#3]{#1}}

\newcommand{\egdef}{\stackrel{\textrm {def}}{=}}
\newcommand{\ds}{\displaystyle}

\newcommand{\R}{\mathbb{R}}

\newcommand{\Z}{\mathbb{Z}}

\newcommand{\N}{\mathbb{N}}
\newcommand{\T}{\mathbb{T}}

\newcommand{\A}{\mathcal{A}}
\newcommand{\Cc}{\mathcal{C}}

\newcommand{\Q}{\mathcal{Q}}

\newcommand{\B}{\mathcal{B}}

\newcommand{\F}{\mathcal{F}}
\newcommand{\Pro}{{\mathbb{P}}}

\newcommand{\mob}{\boldsymbol{\mu}}

\newcommand{\lio}{\boldsymbol{\lambda}}
\newcommand{\Un}{\boldsymbol{1}}
\newtheorem*{thank}{\ \ \ \bf Acknowledgment}



\title[M\"{o}bius random law ]{M\"{o}bius random law and infinite rank-one maps}

\author{e. H. el Abdalaoui}
\address{Normandie University, University of Rouen
  Department of Mathematics, LMRS  UMR 60 85 CNRS\\
Avenue de l'Universit\'e, BP.12
76801 Saint Etienne du Rouvray - France .}
\email{elhoucein.elabdalaoui@univ-rouen.fr}
\urladdr{http://www.univ-rouen.fr/LMRS/Persopage/Elabdalaoui/}
\author{Cesar E. Silva}
\address{Department of Mathematics and Statistics, 18 Hoxsey Street, Williams College, Williamstown, MA 01267 }
\email{csilva@williams.edu }
\urladdr{http://web.williams.edu/Mathematics/csilva/}

\subjclass[2020]{Primary 37A40, 37F20}

\dedicatory{}

\keywords{ rank-one maps,  M\"{o}bius function, Liouville function, Sarnak's conjecture, infinite rank-one maps, dissipative maps, conservative maps}
\begin{document}
\maketitle
\begin{abstract} We  prove that Sarnak's conjecture holds for any infinite measure symbolic rank-one map. We further extended Bourgain-Sarnak's result, which says that the M\"{o}bius function is a good weight for the ergodic theorem, to maps acting on  $\sigma$-finite measure spaces.  We also discuss and extend Bourgain's theorem by establishing that  there is a class of maps for which the M\"{o}bius disjointness property holds for any continuous bounded function. Our proof allows us to obtain an extension of Bourgain's theorem on M\"{o}bius disjointness for bounded rank one maps and a simple and self-contained proof of this fact. 
\end{abstract}

\section{Introduction}\label{intro}
We are interested in studying the M\"{o}bius-Liouville randomness Law from the dynamical point of view \footnote{The results were announced in \cite{elabdal-Silva} }. 
In general terms, this law  states  that the Liouville function and the M\"{o}bius function are orthogonal to any deterministic sequence. In 2010, P. Sarnak in his seminal paper \cite{S} proposed to consider  sequences $(a_n)$ arising from dynamical
systems $(X,T)$, where $X$ is a compact metric space and $T$ is a homeomorphism with topological entropy zero. More precisely, Sarnak made the following conjecture: For any dynamical system $(X,T)$ with topological entropy zero, it is the case that 
$$\frac1{N}\sum_{n=1}^{N}\mob(n)f(T^nx) \tend{N}{+\infty}0,~~~\forall f \in \Cc(X), \forall x \in X,$$
where $\Cc(X)$ is the linear space of all continuous functions on $X$.\\

We recall that the the M\"{o}bius function is  given by $\mob(1)=1$ and
\begin{equation}\label{def:mob}
\mob(n)=
\begin{cases}
(-1)^k& \text{ if $n$ is a product of $k$ distinct primes},\\
0& \text{ otherwise},
\end{cases}
\end{equation}
and the Liouville function $\lio\colon \N^\ast\to \{-1,1\}$ is defined by
$$
\lio(n)=(-1)^{\Omega(n)},
$$
where $\Omega(n)$ is the number of prime factors of $n$ counting multiplicities. The importance of these two functions in number theory is well known and may be illustrated by the following statement
\begin{equation}\label{E:la}
\sum_{n\leq N}\lio(n)={\rm o}(N)=\sum_{n\leq N}\mob(n),
\end{equation}
which is equivalent to the Prime Number Theorem, see e.g.\ \cite[p.~91]{Ap}. We recall also the classical connection of $\mob$ with the Riemann zeta function, namely
$$
\frac1{\zeta(s)}=\sum_{n=1}^{\infty}\frac{\mob(n)}{n^s} \text{ for any }s\in\mathbb{C}\text{ with }\Re(s)>1.
$$
In \cite{Titchmarsh}, it is shown that the Riemann Hypothesis is equivalent to the following: for each $\varepsilon>0$, we have
\[
\sum_{n\leq N}\mob(n)={\rm O}_\varepsilon\left(N^{\frac12+\varepsilon}\right) \text{ as }  N \to \infty.
\]
This latter result is due to Littlewood.\\

In this article, our aim is to investigate the M\"{o}bius-Liouville randomness Law for  dynamical sequences arising from dynamical systems $(X,T)$ where $X$ is now locally compact, and $T$ is a homeomorphism with topological entropy zero; that is, in the same spirit of Sarnak's conjecture, we ask the following: Let $X$ be a locally compact space and $T$ an homeomorphism with topological entropy zero. Do we have
\begin{eqnarray}\label{Sarnak-0}
\frac1{N}\sum_{n=1}^{N}\mob(n)f(T^nx) \tend{N}{+\infty}0,~~~\forall f \in \Cc_0(X), \forall x \in X,
\end{eqnarray}
where $\Cc_0(X)$ is the linear space of all continuous functions on $X$ which vanish at infinity.\\

Our attention is focused on  dynamical systems for which there is no finite invariant measure, or if there is a finite invariant measure it is atomic. 

We start by showing that for the classical example $T:~~n\in \Z \mapsto n+1$, this version of M\"{o}bius disjointness holds. 
Indeed, let $(f(n))_{n \in \Z}$ be a convergent sequence, say to $c$. Then $((f-c)(n))_{n \in \Z}$ converges to zero, that is, it  vanishes at infinity. Therefore by Cesaro's theorem 
\begin{align}
	\frac{1}{N}\sum_{ n \geq N} |f(n)-c| \tend{N}{+\infty}0.
\end{align}
But
\begin{align}
\Big|	\frac{1}{N}\sum_{ n \geq N}\mob(n)(f(n)-c)\Big|
\leq  \frac{1}{N}\sum_{ n \geq N} |f(n)-c|,	
\end{align}
since the M\"{o}bius function is bounded. We thus get 
\begin{align}
	\frac{1}{N}\sum_{ n \geq N}\mob(n)(f(n)-c) \tend{N}{+\infty}0.
\end{align}
 To conclude that $f$ is orthogonal in the sense of Rauzy,  it suffices to notice that, by the Prime Number Theorem, we have 
 \begin{align}
 \frac{1}{N}\sum_{ n \geq N}\mob(n)\tend{N}{+\infty}0.
 \end{align}
 
At this point we have established that  M\"{o}bius disjointness holds at the point 0. We conclude by noticing that the same proof can be run for any $x$ in $\Z.$

We remark  that the algebra of functions that we choose is important. Indeed, for the shift map of $\Z$, all function are continuous. If we consider M\"{o}bius disjointness for all continuous functions, then we can see that orthogonality fails. Indeed, take $f(n)=\mob(n),$ for all $n \in\Z$, (the definition  of $\mob$ is extended in the usual fashion). Therefore,
\begin{align}
\frac{1}{N}\sum_{ n \geq N}\mob(n) f(S^n(0))= \frac{1}{N}\sum_{ n \geq N}\mob(n)^2\tend{N}{+\infty}\frac{6}{\pi^2}.
\end{align}
by the classical computation of the density of the square-free sets.   This simple example shows that for the infinite case  we need to specify the algebra of functions and cannot consider all continuous functions. We also note that if the algebra is the space of periodic functions, then  M\"{o}bius disjointness holds by Dirichlet's theorem. 
This example covers all the dissipative cases as any ergodic invertible dissipative transformation is isomorphic to the shift \cite[Exercise 1.2.1, p.22]{Aa}.

For the conservative case,  we are interested in the class of dynamical system called infinite rank-one, i.e., rank-one transformations with an infinite Radon invariant measure. 

 J. Bourgain initiated the study of Sarnak's conjecture for  finite rank-one maps in \cite{B}, where  he established that the conjecture is true for a class of rank-one maps with bounded parameters. The proof is based on some spectral arguments. We note  that in Bourgain's proof, the fact that the measure is finite or infinite is not used  and therefore the proof is valid for  the infinite measure case; we will extend and present a self-contained proof of Bourgain's argument in a later section.

Subsequently, E. H. el Abdalaoui, Lema\'{n}czyk, and de la Rue extended Bourgain's result to a large class of finite rank-one maps \cite{elabdal-lem-de-la-rue}. Their argument is based on the weak-closure limits in the centralizer of the given rank-one maps. These arguments cannot be extended to the infinite measure case. 
In the same year, V. Ryzhikov gave a simple proof of Bourgain's result for a specific class of finite rank-one maps. Indeed, he established that weakly mixing non-rigid rank-one maps with bounded parameters have  the minimal self-joinings property (MSJ) \cite{Ryzhikov}. Furthermore, it is well-know that the MSJ property combined with the  Katai-Bourgain-Sarnak-Ziegler criterion implies that Sarnak's conjecture holds for the class of maps with the 
minimal self-joinings property. Later, Ryzhikov's result was revisited by A. Gao and H. Hill in \cite{Gao-Hill}, where the authors extended Ryzhikov's result by establishing that canonical bounded rank-one maps have  trivial centralizer.     Despite all these initiatives and efforts, the validity of Sarnak's conjecture for all rank-one maps is still open. We note  again that those arguments cannot be used for the infinite measure case.

Here, our concern is to study this conjecture for infinite symbolic rank-one maps. It  turns out that in this setting we are able to obtain that the conjecture is true (Corollary~\ref{main-2}). We start by proving that \eqref{Sarnak-0} holds for almost all points. 


\section {Condition \eqref{Sarnak-0} holds for almost all points.}

In this section we shall extend the Bourgain-Sarnak result which states that if $(X,\B,T,\mu)$ is a dynamical system where  $\mu$ is a probability measure and $T$ is a measure-preserving transformation on $X$, then for almost all points with respect to $\mu$, for any $f \in L^2(X)$, we have
$$\frac1{N}\sum_{n=1}^{N}\mob(n)f(T^nx) \tend{N}{+\infty}0.$$
In  fact,  Sarnak pointed out that his result can obtained as a consequence of the following Davenport's estimate \cite{Da}: for each $A>0$, we have
\begin{equation}\label{vin}
\max_{z \in \T}\left|\displaystyle\sum_{n \leq N}z^n\mob(n)\right|\leq C_A\frac{N}{\log^{A}N}\text{ for some }C_A>0,
\end{equation}
combined with the spectral theorem and some tools from \cite{Bourgain-ergodic}.
Here, we shall prove the following proposition.

\begin{Prop}Let $(X,\B,T,\mu)$ be a $\sigma$-finite conservative dynamical system, where $X$ is a locally compact space and $T$ is
a measure-preserving transformation on $X$ and $\mu$ a $\sigma$-finite measure. Then, for any $f \in \Cc_0(X)$, for almost all
$x$ with respect to $\mu$, we have
 \begin{eqnarray}\label{a-s}
 \frac1{N}\sum_{n=1}^{N}\mob(n)f(T^nx) \tend{N}{+\infty}0.
 \end{eqnarray}
\end{Prop}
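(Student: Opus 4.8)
The plan is to reduce to square-integrable $f$ and then run the spectral argument sketched before the statement, the only genuine wrinkle being that in infinite measure a function in $\Cc_0(X)$ need not lie in $L^2(\mu)$.

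First I would reduce to $f\in L^2(\mu)$. Since $\mu$ is finite on compact sets (being a Radon measure on the locally compact space $X$), every $g\in\Cc_c(X)$ lies in $L^2(\mu)$, and $\Cc_c(X)$ is sup-norm dense in $\Cc_0(X)$. Given $f\in\Cc_0(X)$ and $\varepsilon>0$, pick $g\in\Cc_c(X)$ with $\|f-g\|_\infty<\varepsilon$; since $|\mob|\le1$,
$$\Big|\frac1N\sum_{n=1}^N\mob(n)(f-g)(T^nx)\Big|\le\frac1N\sum_{n=1}^N|f-g|(T^nx)\le\varepsilon$$
uniformly in $x$ and $N$. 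Hence if \eqref{a-s} holds a.e.\ for every such $g$, then $\limsup_N|\frac1N\sum_{n\le N}\mob(n)f(T^nx)|\le\varepsilon$ a.e.; letting $\varepsilon$ run through a null sequence gives the claim for $f$. So it suffices to treat $f\in L^2(\mu)$.

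Second, I would run the spectral estimate. Let $U$ be the Koopman isometry $Uf=f\circ T$ on $L^2(\mu)$. The autocorrelation $(\langle U^nf,f\rangle)_{n\ge0}$ is positive definite (using $U^*U=I$), so Herglotz's theorem yields a finite positive measure $\sigma_f$ on $\T$ with $\langle U^nf,f\rangle=\int_\T z^n\,d\sigma_f(z)$ and $\sigma_f(\T)=\|f\|_2^2$. Writing $S_Nf=\frac1N\sum_{n=1}^N\mob(n)U^nf$, expanding the square and using $\langle U^nf,U^mf\rangle=\int z^{n-m}d\sigma_f$ gives
$$\|S_Nf\|_2^2=\int_\T\Big|\frac1N\sum_{n=1}^N\mob(n)z^n\Big|^2 d\sigma_f(z).$$
Davenport's bound \eqref{vin} makes the integrand at most $C_A^2/\log^{2A}N$ uniformly in $z$; integrating against the finite measure $\sigma_f$ gives $\|S_Nf\|_2^2\le C_A^2\|f\|_2^2/\log^{2A}N$ for every $A>0$.

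Third, I would upgrade $L^2$ decay to pointwise convergence, and this is the main obstacle, since the logarithmic gain is too weak to sum over all $N$. Along $N_k=2^k$ the bound with $A=2$ is summable, so $\sum_k\|S_{N_k}f\|_2^2<\infty$ and, by Chebyshev together with Borel--Cantelli, $S_{N_k}f(x)\to0$ a.e. It remains to fill the dyadic gaps: the oscillation $\max_{N_k\le N<N_{k+1}}|S_Nf-S_{N_k}f|$ splits into a crude part dominated by $|S_{N_k}f|$ and the maximal partial sum $M_k=\max_{N_k<N\le N_{k+1}}|\sum_{n=N_k+1}^N\mob(n)U^nf|$. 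For $M_k$ I would invoke the Rademacher--Menshov chaining inequality (the tools imported from \cite{Bourgain-ergodic}), feeding in, for each dyadic subinterval $I\subset(N_k,N_{k+1}]$, both the trivial block bound $\|\sum_{n\in I}\mob(n)U^nf\|_2\le|I|\,\|f\|_2$ and the Davenport block bound $\|\sum_{n\in I}\mob(n)U^nf\|_2\le C_A N_k\|f\|_2/k^A$. Balancing the two at the crossover scale yields $\|M_k\|_2\le C N_k\|f\|_2/k^{A/2}$, whence $\sum_k N_k^{-2}\|M_k\|_2^2<\infty$ for $A=2$ and so $M_k(x)/N_k\to0$ a.e. Combining, the oscillation tends to $0$ a.e., and together with the subsequential limit this gives $\frac1N\sum_{n=1}^N\mob(n)f(T^nx)\tend{N}{+\infty}0$ a.e., as required. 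The delicate step is entirely the organization of this subsequence-plus-maximal-inequality scheme that converts the weak Davenport gain into almost everywhere convergence.
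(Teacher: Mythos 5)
Your proposal is correct, and its skeleton is the same as the paper's: reduce by sup-norm density to compactly supported (hence square-integrable) functions, convert the $L^2$ norm of the weighted average into an integral of $\bigl|\frac1N\sum_{n\le N}\mob(n)z^n\bigr|^2$ against the finite spectral measure $\sigma_f$, apply Davenport's estimate uniformly in $z$, pass to a subsequence where the resulting bounds are summable, and then fill the gaps between subsequence times. The genuine difference is in that last, gap-filling step. The paper avoids maximal inequalities entirely: it takes $N=[\rho^m]$ for an arbitrary $\rho>1$, bounds the increment between consecutive subsequence times crudely by $\frac{\|f\|_\infty}{[\rho^m]}\bigl([\rho^{m+1}]-[\rho^m]\bigr)\to\|f\|_\infty(\rho-1)$, and then lets $\rho\downarrow1$ along a countable family; this works precisely because $f\in\Cc_K(X)$ is bounded, and it keeps the proof short and self-contained. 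You instead fix the dyadic sequence $N_k=2^k$ (for which the crude bound is useless, since the block length is comparable to $N_k$) and control the oscillation inside each block by Rademacher--Menshov chaining, balancing the trivial block bound $|I|\,\|f\|_2$ against the Davenport block bound $C_A N_k k^{-A}\|f\|_2$ at the crossover scale $2^j\approx N_k k^{-A}$; your resulting estimate $\|M_k\|_2\le CN_k k^{-A/2}\|f\|_2$ is correct, and this is essentially Bourgain's scheme from \cite{Bourgain-ergodic}, to which the paper only alludes. Your route is technically heavier but buys a stronger result: it never uses $\|f\|_\infty$, only $\|f\|_2$, so it actually proves the full Bourgain--Sarnak statement that the weighted averages tend to zero a.e.\ for every $f\in L^2(\mu)$, not merely for $f\in\Cc_0(X)$. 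A minor additional merit is that you make explicit the hypothesis, left implicit in the paper, that $\mu$ is finite on compact sets, which is what guarantees $\Cc_K(X)\subset L^2(\mu)$ and the finiteness of $\sigma_f$.
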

\begin{proof}Since the subspace of continuous functions with compact support $\Cc_K(X)$ is dense in $\Cc_0(X)$, it is suffices to
prove \eqref{a-s} for $f \in \Cc_K(X)$. But, if $f$ is $\Cc_K(X)$ then we can apply the spectral theorem to write
\[
\Big\|\frac1{N}\sum_{n\leq N} f(T^nx)\mob (n)\Big\|_2=\Big\|\frac1{N}\sum_{n\leq N} z^n \mob (n)\Big\|_{L^2(\sigma_f)},
\]
where $\sigma_f$ is the spectral measure of $f$. We know  that the spectral measure $\sigma_f$ is a finite  measure on the circle determined by its Fourier transform given by 
\[\widehat{\sigma}_f(n)=\int f\circ T^n\cdot \overline{f}\ d\mu,\] $n\in\Z$.
 This, combined by Davenport's estimate~\eqref{vin} yields, for each $A>0$,
\begin{equation}
  \label{eq:Dav2}
  \Big\|\frac1{N}\sum_{n\leq N} f(T^nx)\mob (n)\Big\|_2 \leq \frac{C_A}{{\log^{A}N}},
\end{equation}
where $C_A$ is a constant that depends only on $A$. Take $\rho>1$, then for $N=[\rho^m]$ for some $m\ge1$, we rewrite \eqref{eq:Dav2} as follows
\[
\Big\|\frac1{N}\sum_{n\leq N}f(T^nx)\mob (n)\Big\|_2 \leq \frac{C_A}{{(m\log(\rho))}^{\varepsilon}}\text{ for any }A>0.
\]
By choosing $A=2$, we obtain
\[
\sum_{ m \geq 1}\Big\|\frac1{[\rho^m]}\sum_{n\leq [\rho^m]}f(T^nx)\mob (n)\Big\|_2 <+\infty.
\]
In particular, by the triangle inequality for the $L^2$ norm,
\[
  \sum_{ m \geq 1} \left| \frac1{[\rho^m]}\sum_{n\leq [\rho^m]}f(T^nx)\mob (n) \right| \in L^2(X,\mathcal{B},\mu)
\]
and the above sum is almost surely finite. Hence, for almost every point $x \in X$, we have
\begin{eqnarray}\label{aj}
\frac1{[\rho^m]}\sum_{n\leq [\rho^m]}f(T^nx) \mob (n) \tend{m}{\infty}0.
\end{eqnarray}
Now, if $[\rho^m]\leq N < {[\rho^{m+1}]+1}$ then we have
\begin{align*}
\Big|\frac1{N}\sum_{n\leq N} f(T^nx) \mob (n)\Big|&=\Big| \frac1{N}\sum_{n\leq [\rho^m]}f(T^nx) \mob (n)+ \frac1{N}\sum_{[\rho^m]+1\leq n\leq N}f(T^nx) \mob (n)\Big|\\
&\leq  \Big| \frac1{[\rho^m]}\sum_{n\leq [\rho^m]}f(T^nx) \mob (n)\Big|+\frac{\|f\|_{\infty}}{[\rho^m]} (N-[\rho^m])\\
&\leq  \Big|\frac1{[\rho^m]}\sum_{n\leq [\rho^m]}f(T^nx) \mob (n)\Big|+\frac{\|f\|_{\infty}}{[\rho^m]} ([\rho^{m+1}]-[\rho^m]).
\end{align*}
Since $\frac{\|f\|_{\infty}}{[\rho^m]} ([\rho^{m+1}]-[\rho^m])\tend{m}{+\infty}\|f\|_{\infty}(\rho-1)$,  using~\eqref{aj} and the fact that $\rho$ can be taken arbitrarily close to 1, we obtain
\[
\frac1{N}\sum_{n\leq N} f(T^nx) \mob (n) \tend{N}{\infty}0 \text{ for a.e. }x\in X,
\]
completing the proof.
\end{proof}
The previous result is  a  Bourgain-Sarnak theorem on the M\"{o}bius disjointness almost everywhere. For the infinite measure case, it is well known that for all $f \in L^1(X)$, the classical Bikhoff ergodic averages converge to $0$ a.e.. It follows that, for all  $f \in L^1(X)$, for almost all $x \in X$,
\[
\frac1{N}\sum_{n\leq N} f(T^nx) \mob (n) \tend{N}{\infty}0 .
\]
However, the natural way to formulate  the M\"{o}bius disjointness almost everywhere in the case of infinite measure is to  consider the Hopf ratio ergodic averages pondered with  M\"{o}bius function, that is,
\begin{align}\label{Hopf}
	\frac{\displaystyle \sum_{n=1}^{N}\mu(n)f(T^nx)}{\displaystyle\sum_{n=1}^{N}p(T^nx)}.
\end{align}
\noindent{}where $f,p \in L^1(X,\nu)$, $p>0$, and $T$ is conservative measure preserving transformation of the $\sigma$-finite measure space $(X,\mathcal{A},\nu)$.

We notice that Hopf's maximal ergodic inequality holds for \ref{Hopf}. Also, if $T$ is an ergodic measure preserving transformation of a $\sigma$-finite measure space $(X,\A,\nu)$, then , for any $f, p\in L^1(X), p>0,$ for almost all $x \in X$,
\begin{align}\label{Hopf-sup}
\limsup_{N\to \infty}\Bigg|\frac{\displaystyle \sum_{n=1}^{N}\mu(n)f(T^nx))}{\displaystyle\sum_{n=1}^{N}p(T^nx)}\Bigg|
\leq \frac{\displaystyle \int |f(y)| d\nu(y)}{\displaystyle \int p(y) d\nu(y)}.
\end{align}

Now,  for a dissipative map $T$, for any ${f},{p}\in{L}^{1}{\left({X},\mu\right)}$, for almost all ${x}\in{X}$, we have

\[
\lim_{{{n}\rightarrow+\infty}}{\frac{{{{\sum_{{n}={1}}^{N}}\mu{\left({n}\right)}{f{{\left({T}^{n}{x}\right)}}}}}}{{{{\sum_{{n}={1}}^{N}}{p}{\left({T}^{n}{x}\right)}}}}}={\frac{{{{\sum_{{n}={1}}^{+\infty}}\mu{\left({n}\right)}{f{{\left({T}^{n}{x}\right)}}}}}}{{{{\sum_{{n}={1}}^{+\infty}}{p}{\left({T}^{n}{x}\right)}}}}}.
\]
However, we show that for the Boole map and $ {f{\in}}{L}^{\infty}{\left({X},\mu\right)}$   the convergence does not hold.
The Boole map is given by $T:  x \in \R\setminus\{0\} \mapsto x-\frac{1}{x}$. It is well known  that $T$  preserves Lebesgue measure and  is ergodic and conservative \cite[p.215]{Aa}.  Now, taking $f=\mathbbm{1}$, using a deep result of Odlyzko-te Riele \cite{od-te} combined with    \cite[Exercise 2.2.2]{Aa}  we have 
\[
{\overline{\lim}}{\frac{{{{\sum_{{n}={1}}^{N}}\mu{\left({n}\right)}{\mathbbm{1}{{\left({T}^{n}{x}\right)}}}}}}{{{{\sum_{{n}={1}}^{N}}{p}{\left({T}^{n}{x}\right)}}}}}>{\frac{{{1.06 \pi}}}{{{\sqrt{2}\int{p}{\left({y}\right)}{d}\mu{\left({y}\right)}}}}}
\]
and
\[
{\underline{\lim}}{\frac{{{{\sum_{{n}={1}}^{N}}\mu{\left({n}\right)}{\mathbbm{1}{{\left({T}^{n}{x}\right)}}}}}}{{{{\sum_{{n}={1}}^{N}}{p}{\left({T}^{n}{x}\right)}}}}}<{\frac{{-{1.009 \pi}}}{{{\sqrt{2}\int{p}{\left({y}\right)}{d}\mu{\left({y}\right)}}}}}.
\]

\section{Rank-one maps and our first main result}\label{rk-one}
As mentioned earlier, the study of Sarnak's conjecture for the class of symbolic rank-one maps  was initiated by Bourgain \cite{B}, where the author proved that Sarnak's conjecture is true for the subclass of maps for which the cutting parameter $(p(n)) \subset \N$ and
the stacking parameter $(s(n,i))_{i=0}^{p_n-1} \subset  \N^{p_n}, n\in \N$ are bounded and $s(n,p_n-1)=0$. In \cite{elabdal-lem-de-la-rue},
the authors relaxed this latter condition and extended Bourgain's result to a large class of rank-one maps including rigid Generalized Chacon's maps and Katok's maps. However, as it was pointed out, their methods can not be extended to the case of infinite measure.\\

 There are several definitions of rank-one maps in ergodic theory. Here we consider the symbolic one. Let $T$ be a shift map
 on $\{0,1\}^{\Z}$ and $(p(n))_{n \in \N}$  be a positive sequence of integers. For each $n$, consider the $p_n$-tuple of non-negative integer denoted by $(s(n,i))_{i=0}^{p_n-1}$. The sequence $(p(n))_{n \in \N}$ will play the role of what are called the cutting parameters and $((s(n,i))_{i=0}^{p_n-1})_{n \in \N}$ are the spacers parameters. Put
 $$ W_0\egdef 0 ; \quad W_{n+1}\egdef W_n 1^{s_{n,0}}W_n 1^{s_{n,1}}\cdots W_n 1^{s_{n,p_n-1}}. $$
 The sequence of finite words $(W_n)_{n\ge 1}$ over the alphabet $\{0,1\}$ is called the \emph{building blocks} sequence.\\

 The length $|B_n|$ of the building block of order $n$ is equal to $h_n$ and the symbols $1$ in the building blocks will  be called the \emph{spacers}.\\

  Then we consider the subshift $X \subset \{0,1\}^\Z$ consisting of  the set of bi-infinite sequences $(x_j)_{j\in\Z}$ satisfying
  \begin{equation}
    \label{eq:def_subshift}
    \forall i<j ,\ x|_i^j\egdef x_i x_{i+1} \ldots x_{j-1} \mbox{ is a subword of }W_m\mbox{ for some $m\ge1$}.
  \end{equation}
We consider on $X$ the product topology which turns $X$ into a metrizable compact space, and we denote by $T_s$ the shift of coordinates, which is a homeomorphism of $X$. According to Kalikow \cite{Kalikow}, $T_s$ is said to be \emph{nondegenerate} if the limit sequence $W_\infty\egdef \lim_{n\to\infty}W_n$ is aperiodic. Otherwise, $T_s$ is said to be \emph{degenerate}. We further have that if  $T_s$ is degenerate, then $T_s$ is isomorphic to an odometer, in particular, it has infinitely many rational eigenvalues. For the proof of this fact and more details on the connection between the classical and useful definition of rank-one map by cutting and staking method and the symbolic definition, we refer to \cite{elabdal-lem-de-la-rue}.\\

Assume that $T_s$ is nondegenerate and let $v$ be a finite word. For $k \in \Z$, put
$$O_{v,k}=\big\{x \in X: x {\textrm{~~has~~an~~occurrence~~of~~}}v~~\textrm{at~~position}~~k\big\}.$$
It is follows that $O_{v,k}$ is a basic open set of $X$. We further have that there is an atomless shift-invariant measure $\mu$
on X given by
$$\mu(O_{v,k})=\lim_{n \to \infty}\frac{fr(v,W_n)}{fr(0,W_n)},$$
where $fr(u,W_n)$ is the number of occurrences of $u$ in $W_n$. We notice that $\mu$ is the unique shift-invariant measure on $X$ with $\mu(O_{0,0})=1$. $T_s$ is said to be infinite if $\mu(O_{1,0})=\infty.$ In this case, $\mu$ is the unique invariant measure up to constants. There is also the atomic measure concentrated on the infinite word $\Un=1111\cdots.$\\

We are now able to state our first main result.
\begin{Th}\label{main-1}  Let $(X,T)$ be a dynamical system ($X$ is a compact metric space and $T$ a homeomorphism). 
If $(X,T)$ has only one invariant probability measure and this measure is atomic, then it satisfies the
M\"{o}bius disjointness law.
\end{Th}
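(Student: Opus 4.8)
The plan is to first pin down the structure of the invariant measure, then use unique ergodicity to replace the orbit sequence $(f(T^nx))_n$ by a genuinely periodic sequence up to a density-zero error, and finally kill that periodic sequence against $\mob$ using Davenport's estimate \eqref{vin}. \textbf{Step 1 (structure of the measure).} Let $\mu$ be the unique invariant probability measure, which is atomic. If $p$ is an atom with $\mu(\{p\})=c>0$, then invariance forces $\mu(\{T^jp\})=c$ for all $j$, so the orbit of $p$ carries mass $\ge c$ per point and must be finite; hence every atom is periodic and the atoms decompose into periodic orbits. Each periodic orbit $O$ carries its uniform measure, which is invariant, so by unique ergodicity there is exactly one such orbit and $\mu=\frac1k\sum_{j=0}^{k-1}\delta_{p_j}$, where $O\egdef\{p_0,\dots,p_{k-1}\}$ and $Tp_j=p_{j+1\bmod k}$ (the case $k=1$, a fixed point, being allowed).

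\textbf{Step 2 (reduction to a periodic model).} By unique ergodicity the Birkhoff averages $\frac1N\sum_{n\le N}g(T^nx)$ converge to $\int g\,d\mu$ uniformly in $x$ for every $g\in\Cc(X)$; in particular every point is generic, so for any neighborhood $U\supseteq O$ the set $\{n:T^nx\notin U\}$ has density $0$. Fix $f\in\Cc(X)$, put $c_j\egdef f(p_j)$, and use uniform continuity of $f$ and of $T$: off a density-zero set of $n$, the point $T^nx$ lies in a small ball around a unique $p_{j(n)}$ with $|f(T^nx)-c_{j(n)}|<\varepsilon$, and along each maximal run of consecutive integers staying near $O$ the index advances by one, so $j(n)\equiv n+a\pmod k$ for a run-dependent constant $a$. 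Expanding $c_j=\sum_{\ell=0}^{k-1}\hat c_\ell\,\omega^{\ell j}$ with $\omega\egdef e^{2\pi i/k}$ reduces matters, for each $\ell$, to the sums $\sum_n\mob(n)\omega^{\ell n}$ taken over these runs.

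\textbf{Step 3 (number-theoretic input).} The global ingredient is Davenport's estimate \eqref{vin}: applied at the root of unity $z=\omega^\ell$ it gives $\frac1N\sum_{n\le N}\mob(n)\omega^{\ell n}\to0$ for every $\ell$ (the case $\ell=0$ being the Prime Number Theorem). Thus any genuinely $k$-periodic sequence is orthogonal to $\mob$ in density. Combined with Step 2, if the phase $a$ were the \emph{same} on every run, then $f(T^nx)$ would agree with one fixed $k$-periodic sequence up to an $L^1$-density error $O(\varepsilon)$, and $\frac1N\sum_{n\le N}\mob(n)f(T^nx)\to0$ would follow on letting $\varepsilon\to0$.

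\textbf{Step 4 (the main obstacle).} The crux is precisely the control of the phase across excursions. When $k=1$ there is a single phase and the argument is immediate: $\int|f-c_0|\,d\mu=0$, so unique ergodicity gives $\frac1N\sum_{n\le N}|f(T^nx)-c_0|\to0$, and since $|\mob|\le1$ and $\frac1N\sum_{n\le N}\mob(n)\to0$ by \eqref{vin}, the claim drops out. For $k\ge2$, however, the orbit of $x$ may re-enter a neighborhood of $O$ \emph{out of phase} after each excursion, so the constant $a$ can change from run to run; since $X$ need not be minimal (only $O$ is), there is in general no continuous eigenfunction $\psi_\ell$ carrying a globally coherent phase. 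Summing the run contributions then needs more than \eqref{vin}: one must show $\sum_\nu\big|\sum_{n\in\text{run }\nu}\mob(n)\omega^{\ell n}\big|=o(N)$, where the number of runs is $o(N)$ (it is bounded by the density-zero count of excursions, each contributing at least one step outside $U$). I expect this to be the hard part, to be supplied either by an interval version of \eqref{vin} for $\sum\mob(n)e(n\alpha)$ giving cancellation on the long runs (the short runs being negligible since excursions number $o(N)$), or by a second-moment / van der Corput argument exploiting that the phase is constant on each long block.
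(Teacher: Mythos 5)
Your Steps 1--3, together with the $k=1$ branch of Step 4, are exactly the paper's own proof. The paper's lemma (proved by weak-$*$ compactness of the empirical measures $\frac1{N}\sum_{n\le N}\delta_{T^nx}$ plus uniqueness of the invariant measure) gives $\frac1N\sum_{n\le N}(f-f(z))(T^nx)\to 0$ for every $x$, and the theorem follows from the splitting $\frac1N\sum_{n\le N}\mob(n)f(T^nx)=\frac1N\sum_{n\le N}\mob(n)\bigl(f-f(z)\bigr)(T^nx)+f(z)\cdot\frac1N\sum_{n\le N}\mob(n)$ and the Prime Number Theorem. Your variant, which applies unique ergodicity to the continuous function $|f-c_0|$ and then uses $|\mob|\le 1$, is the same argument; it is in fact slightly cleaner, since killing the $\mob$-twisted first term really does require the absolute-value form of the lemma, a point the paper's write-up glosses over.

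The divergence concerns what ``atomic'' means. You read it as allowing a periodic orbit of period $k\ge 2$ (your Step 1 correctly shows this is the only alternative to a Dirac mass), and your Step 4 correctly isolates the resulting difficulty---phase incoherence of the orbit of $x$ across successive returns to a neighborhood of $O$---but you do not resolve it, so as a proof of that general statement your proposal has a genuine, self-acknowledged gap. Moreover the gap cannot be closed by \eqref{vin} plus bookkeeping: take the orbit closure of the bi-infinite word obtained by concatenating the blocks $(01)^{n_k}2$ with $n_k=k$. This subshift is uniquely ergodic, with invariant measure uniform on the $2$-periodic orbit $\{(01)^{\infty},(10)^{\infty}\}$, yet the run containing time $n$ has length about $2\sqrt n$ and the phase alternates from run to run; there are about $\sqrt N$ runs up to time $N$, so the endpoint bound from \eqref{vin} on each run is far larger than the run length, and disjointness at such a point for a locally constant $f$ amounts to orthogonality of $\mob$ to a sequence like $(-1)^n(-1)^{\lfloor\sqrt n\rfloor}$---something that seems to require short-interval estimates for $\mob$ of Matomäki--Radziwi{\l}{\l} type, far beyond Davenport. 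You should know, however, that the paper does not handle this case either: its lemma begins ``let $z\in X$ be the point where the finite invariant measure is supported,'' i.e., it tacitly reads the hypothesis as saying the unique invariant probability measure is a Dirac mass at a (necessarily fixed) point. That reading is all that is needed for Corollary~\ref{main-2}, since for an infinite symbolic rank-one map the unique invariant probability measure is $\delta_{\Un}$ at the fixed point $\Un=111\cdots$. In short: on the statement the paper actually proves, your argument is complete and coincides with the paper's; on the statement as literally written, your Step 4 gap is real---and it is a gap in the paper as well, which you deserve credit for detecting.
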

For the proof of Theorem \ref{main-1} we need the following lemma.
\begin{lem}Let $(X,T)$  be as in Theorem~\ref{main-1} and let $z\in X$ be the point where the finite invariant measure is supported. Then for any $f \in \Cc(X)$, for any $x \in X$,
we have
$$\frac1{N} \sum_{n=1}^{N} \big(f-f(z)\big)(T^nx) \to 0.$$
 \end{lem}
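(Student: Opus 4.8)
The plan is to exploit the hypothesis that the system is uniquely ergodic with respect to an \emph{atomic} measure supported at the single point $z$. The key structural fact is that unique ergodicity is equivalent to the uniform convergence of Birkhoff averages: for a uniquely ergodic system with invariant measure $\delta_z$, one has for every $g\in\Cc(X)$ that
\[
\frac1{N}\sum_{n=1}^{N} g(T^nx)\To \int g\,d\delta_z = g(z)
\]
\emph{uniformly} in $x\in X$. This is the classical Oxtoby characterization of unique ergodicity, and I would invoke it directly since $X$ is compact metric and $T$ a homeomorphism.

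The main step is then almost immediate: apply this to the function $g=f-f(z)$, which lies in $\Cc(X)$ because constants are continuous and $f(z)$ is just a number. Since $\int\bigl(f-f(z)\bigr)\,d\delta_z = f(z)-f(z)=0$, the Birkhoff averages of $g$ converge (uniformly, hence in particular pointwise for the fixed $x$) to $0$. Concretely,
\[
\frac1{N}\sum_{n=1}^{N}\bigl(f-f(z)\bigr)(T^nx)\To \int\bigl(f-f(z)\bigr)\,d\delta_z=0,
\]
which is exactly the claimed conclusion, and it holds for every $x\in X$ as required.

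The only point requiring care—and the place I would expect a referee to look—is justifying that $\delta_z$ is the \emph{unique} invariant probability measure so that Oxtoby's theorem applies. Here the hypothesis of Theorem~\ref{main-1} does the work for us: the system is assumed to have exactly one invariant probability measure, and that measure is atomic. One should remark that an atomic invariant probability measure for a homeomorphism must be supported on a finite union of periodic orbits, and that unique ergodicity forces this support to be a single fixed point $z$ (a point mass $\delta_z$ is invariant precisely when $Tz=z$); this identifies the invariant measure as $\delta_z$ and legitimizes the computation $\int g\,d\delta_z=g(z)$. With that identification in hand the rest is the standard uniform ergodic theorem, so the argument is genuinely short; the substance is entirely in recognizing that the atomic-unique-ergodicity hypothesis reduces the integral against the invariant measure to evaluation at the fixed point.
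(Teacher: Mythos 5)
Your proposal is correct and takes essentially the same approach as the paper: what you invoke as Oxtoby's theorem is precisely what the paper proves inline, namely that weak-$*$ limit points of the empirical measures $\frac{1}{N_k}\sum_{n=1}^{N_k}\delta_{T^nx}$ are invariant probability measures, so uniqueness forces every limit point to equal $\delta_z$ and the averages of $f-f(z)$ tend to $0$. One caveat: your parenthetical claim that unique ergodicity together with atomicity forces the support to be a single \emph{fixed} point is false in general (for the two-point swap $Ta=b$, $Tb=a$, the unique invariant measure $\frac12(\delta_a+\delta_b)$ is atomic but not a point mass); this does not damage your proof of the lemma, since the lemma's hypothesis already supplies the point $z$, whence $\mu=\delta_z$ and $Tz=z$ are immediate, but it does mean that passing from the ``atomic'' hypothesis of Theorem~\ref{main-1} to a Dirac mass cannot be justified the way you suggest.
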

\begin{proof} By a standard argument, the set $\big\{\frac1{N} \sum_{n=1}^{N} \delta_{T^nx}\big\}$ is compact for the weak convergence star topology. Therefore, for some sequence $(N_k)$ the sequence of probability measures
$$\frac1{N_k} \sum_{n=1}^{N_k} \delta_{T^nx}$$
converges to some measure $\nu$, that is, for any function  $f \in \Cc(X)$, we have
$$\frac1{N_k} \sum_{n=1}^{N_k} f(T^nx) \to \nu(f).$$
We thus get
$$\frac1{N_k} \sum_{n=1}^{N_k} \big(f-f(z)\big)(T^nx) \to \nu(f)-f(z).$$
But the only probability invariant measure is $\delta_{z}$. Hence
$$\frac1{N_k} \sum_{n=1}^{N_k} \big(f-f(z)\big)(T^nx) \to 0.$$
This complete the proof of the lemma.
\end{proof}
\begin{proof}[\textbf{Proof of Theorem \ref{main-1}.}] Let $f$ be a continuous function and write
$$\frac1{N}\sum_{n\leq N} f(T^nx) \mob (n)=\frac1{N}\sum_{n\leq N} \big(f-f(z)\big)(T^nx) \mob (n)+f(z).\frac1{N}\sum_{n\leq N}\mob (n).$$
Then the first term of the right-hand side is zero by the lemma and the second by the Prime Number Theorem. This finish the proof of the theorem.
\end{proof}

\begin{Cor}\label{main-2} The M\"{o}bius disjointness law holds for any infinite symbolic rank-one map.
\end{Cor}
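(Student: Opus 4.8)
The plan is to deduce Corollary~\ref{main-2} from Theorem~\ref{main-1} by checking that an infinite symbolic rank-one map fits the hypotheses of that theorem, namely that it is a homeomorphism of a compact metric space possessing a unique invariant \emph{probability} measure, and that this measure is atomic. First I would recall from the construction in Section~\ref{rk-one} that the subshift $X\subset\{0,1\}^\Z$ is a compact metrizable space and that $T_s$ is a homeomorphism of $X$, so the topological setting of Theorem~\ref{main-1} is automatically satisfied. The substantive task is purely about the invariant measures of $X$.

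The key step is to enumerate the shift-invariant probability measures on $X$ and show there is exactly one, supported on an atom. From the excerpt we know that on $X$ there is an atomless shift-invariant measure $\mu$, normalized by $\mu(O_{0,0})=1$, and that in the \emph{infinite} case $\mu(O_{1,0})=\infty$; moreover $\mu$ is the unique invariant measure up to constants in that regime. Since $\mu(X)\geq\mu(O_{1,0})=\infty$, this measure $\mu$ is not finite and hence cannot be normalized to a probability measure. Therefore $\mu$ is disqualified as a candidate probability measure, and I would argue that the only remaining invariant probability measure is the atomic one $\delta_{\Un}$ concentrated on the fixed infinite word $\Un=1111\cdots$ mentioned at the end of the construction. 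Because $\Un$ is fixed by the shift, $\delta_{\Un}$ is indeed shift-invariant, and it is visibly atomic and a probability measure.

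Thus I would establish that $(X,T_s)$ has a unique invariant probability measure, namely the atom $\delta_{\Un}$, so that $z=\Un$ plays the role of the support point in Theorem~\ref{main-1}. With this verified, Theorem~\ref{main-1} applies verbatim and yields that $(X,T_s)$ satisfies the M\"{o}bius disjointness law, which is exactly the statement of the corollary.

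The main obstacle, and the step requiring the most care, is rigorously proving the \emph{uniqueness} of the invariant probability measure—specifically ruling out any ergodic invariant probability measure other than $\delta_{\Un}$. The cited fact that $\mu$ is the unique invariant Radon measure up to constants pertains to $\sigma$-finite (Radon) measures; one must argue that every invariant \emph{probability} measure either is a scalar multiple of the infinite measure $\mu$ (hence impossible to normalize) or is carried on the complement where the only orbit-closed atomic structure available is the fixed point $\Un$. I would handle this by a mass-distribution argument: any invariant probability measure $\nu$ with $\nu(O_{0,0})>0$ would, by invariance and the defining frequency limits, force a normalization proportional to $\mu$ and thus infinite total mass, a contradiction; hence any invariant probability measure must satisfy $\nu(O_{0,0})=0$, concentrating all mass on sequences with no occurrence of $0$, i.e.\ on $\Un$. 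Making this dichotomy airtight is where the genuine content lies.
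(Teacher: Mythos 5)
Your proposal follows exactly the paper's route: the paper's own proof of Corollary~\ref{main-2} likewise observes that the infinite symbolic rank-one map is infinite measure-preserving with $\delta_{\Un}$ as its only finite invariant measure and then applies Theorem~\ref{main-1}. Your extra mass-distribution argument (normalizing any invariant probability $\nu$ with $\nu(O_{0,0})>0$ to contradict the infiniteness of $\mu$, hence forcing $\nu=\delta_{\Un}$) simply fills in the uniqueness detail that the paper asserts without proof, so the two arguments coincide in substance.
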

\begin{proof} Let $T$ be a rank-one map as defined in Section \ref{rk-one}. It is infinite measure-preserving and it only has one finite invariant measure supported on $\Un$. Then apply Theorem~\ref{main-1}.
\end{proof}

Our first main result (Theorem \ref{main-1}) can also be applied  to the examples from the infinite $(C,F)$-actions constructed  by A. Danilenko \cite[see Theorem 2.1 and the remark that follows]{Sasha}.
\section{Our second main result, Riesz products as spectral types and Bourgain's arguments}\label{bourgain_sec}

In this section we extend and give a self-contained presentation of Bourgain's argument and use it to obtain M\"{o}bius disjointness for a larger class of rank-one maps including rank-one maps acting on $\sigma$-finite space. In fact, We  state and prove our  main result about rank-one maps, which is an extension of Bourgain's theorem on the M\"obius disjointness for the class of rank-one maps with bounded parameters.

\begin{Th}\label{Bourgain}Let $T=T_{((p_n),{(s(n,i))}_{i=0}^{p_n-1},{n \in \N})}$ be a symbolic rank-one map and assume that 
	\[
	\sum_{n=1}^{+\infty}\frac1{p_n^2}=+\infty.
	\]
	Then the M\"{o}bius disjointness law holds for $T$.
\end{Th}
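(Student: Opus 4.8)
The plan is to prove Theorem~\ref{Bourgain} by establishing M\"{o}bius disjointness through the spectral/Riesz-product machinery described at the start of this section, following and extending Bourgain's argument. The key object is the spectral type of the rank-one map $T$, which for rank-one transformations is known to be realized (up to the discrete part) as a \emph{generalized Riesz product} built from the cutting and stacking data. Concretely, for a cylinder/characteristic function $f$ whose spectral measure is (absolutely continuous with respect to) the Riesz product, one has a representation
\[
d\sigma_f \asymp \prod_{n=1}^{\infty}\Big| \frac{1}{\sqrt{p_n}}\sum_{j=0}^{p_n-1} z^{\,h_n j + \text{(spacer shifts)}}\Big|^2\, dz,
\]
and the $L^2(\sigma_f)$-norm of the exponential sum $\frac1N\sum_{n\le N} z^n\mob(n)$ controls the ergodic average by the spectral theorem, exactly as in the Proposition above. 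So the first step would be to reduce, via the spectral theorem and a density argument over a generating algebra of functions, the desired disjointness
\[
\frac1N\sum_{n\le N}\mob(n) f(T^n x)\to 0
\]
to an estimate on $\big\|\frac1N\sum_{n\le N} z^n\mob(n)\big\|_{L^2(\sigma_f)}$.

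\textbf{The core of the argument} is then to show this $L^2(\sigma_f)$-norm tends to $0$ under the hypothesis $\sum_n p_n^{-2}=\infty$. Here I would split the circle according to the multiscale structure of the Riesz product. The heuristic is that the Riesz product measure $\sigma_f$ has almost no mass concentrated near any fixed frequency unless that frequency resonates with the rank-one construction; the factor $\big|\frac{1}{\sqrt{p_n}}\sum_{j} z^{h_n j + \cdots}\big|^2$ behaves like a Fej\'er-type kernel of width $\sim 1/(p_n h_n)$ around the resonant points. The plan is to bound the contribution to the $L^2(\sigma_f)$-norm coming from frequencies far from rational resonances using Davenport's estimate~\eqref{vin} (which gives cancellation uniformly in $z$), and to control the contribution near resonant arcs by a quantitative estimate on how much Riesz-product mass those arcs carry. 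The condition $\sum_n 1/p_n^2=\infty$ is precisely what forces the product $\prod_n (1 - c/p_n^2)$ (or a comparable expression measuring concentration of mass at any single atom/eigenvalue) to vanish, i.e.\ it guarantees that $\sigma_f$ has \emph{no atoms} on the relevant part of the circle and that the measure is sufficiently spread out that the Davenport cancellation survives averaging against it.

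\textbf{The main obstacle}, and the step I expect to require the most care, is exactly this interchange of the uniform-in-$z$ Davenport bound with the integration against the singular Riesz-product measure $\sigma_f$. Davenport gives $\max_z |\frac1N\sum_{n\le N} z^n\mob(n)| = O(N/\log^A N)$, but the naive bound $\|\cdot\|_{L^2(\sigma_f)}\le \sigma_f(\T)^{1/2}\max_z|\cdots|$ throws away the normalization and is too weak because after dividing by $N$ we only get $O(1/\log^A N)$, which is fine pointwise but the difficulty is making this legitimate when $\sigma_f$ may concentrate. The real work is therefore to show that the hypothesis $\sum p_n^{-2}=\infty$ kills any atomic/concentrated part of the spectral measure that would otherwise obstruct disjointness, so that one genuinely integrates the small uniform bound against a finite measure. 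I would handle this by the classical Riesz-product dichotomy: either $\sum_n p_n^{-2}<\infty$ (giving discrete spectrum/rigidity, the excluded case) or $\sum_n p_n^{-2}=\infty$ (giving a continuous spectral measure amenable to the Davenport-type estimate). Assembling the arc decomposition, the resonance count, and the Borel--Cantelli-style summation over scales $N=[\rho^m]$ (as in the Proposition) to pass from the subsequence to the full sequence would complete the proof.
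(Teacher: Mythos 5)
Your proposal has a fundamental gap: the spectral-theorem/Davenport scheme you build on can only yield convergence for $\mu$-almost every $x$, never for every $x\in X$, and the M\"obius disjointness law is an everywhere statement. The identity you invoke,
\[
\Big\|\frac1N\sum_{n\leq N}\mob(n)\, f(T^n x)\Big\|_{L^2(\mu)}=\Big\|\frac1N\sum_{n\leq N}\mob(n)\, z^n\Big\|_{L^2(\sigma_f)},
\]
has an $L^2$-norm \emph{over the orbit variable $x$} on the left; whatever bound you prove on the right (and Davenport's estimate, being uniform in $z$, immediately gives $O(1/\log^A N)$ against \emph{any} finite measure $\sigma_f$, atoms or not --- so your ``main obstacle'' about concentration is illusory), Borel--Cantelli along $N=[\rho^m]$ converts it into almost-everywhere convergence only. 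This is precisely the Proposition of Section 2 of the paper (the Bourgain--Sarnak a.e.\ result), which holds for every measure-preserving system with no hypothesis on $(p_n)$ whatsoever. The telltale sign of the problem is that your argument never genuinely uses $\sum_n 1/p_n^2=\infty$: if the scheme worked, it would prove Sarnak's conjecture for all rank-one maps at every point, which is open. No refinement of the arc decomposition or resonance counting can repair this, because the quantity $\|\cdot\|_{L^2(\sigma_f)}$ simply does not see individual orbits.

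The paper's proof is organized entirely differently, exactly so as to obtain a pointwise-everywhere conclusion. It goes through the Daboussi--K\'atai--Bourgain--Sarnak--Ziegler criterion: it suffices to show, for every $x$ and for infinitely many primes $p\neq q$, that $\frac1N\sum_{n\le N} f(T^{pn}x)f(T^{qn}x)\to 0$. Bourgain's observation bounds this bilinear average by the Hellinger affinity $H(\sigma_{f,(p),N},\sigma_{f,(q),N})$ of pseudo-dilated empirical spectral measures, and the Coquet--Mend\`es-France--Kamae lemma (upper semicontinuity of affinity under weak convergence) reduces everything to proving \emph{mutual singularity} $\sigma_{(p)}\perp\sigma_{(q)}$ of the pseudo-dilations of the generalized Riesz product representing the spectral type. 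That singularity is where the hypothesis actually enters: choosing $\eta\in\{0,1,2\}$ with $\sum_{n\equiv\eta \bmod 3}1/p_n^2=\infty$ and passing to the subsequence $n_j=3j+\eta$, the Klemes--Reinhold lemma produces frequencies $m_j$ with $\widehat{\alpha}(\pm m_j)=1/p_{n_j}$ and multiplicative behavior of the coefficients, and the Peyri\`ere/Brown--Hewitt criterion is applied with $\phi_n(z)=z^{pm_n}$; its divergence condition is exactly $\sum_j 1/p_{n_j}^2=+\infty$. So in the actual proof the hypothesis drives a singularity-of-measures statement (via Fourier coefficients of the Riesz product), not a ``no atoms'' dichotomy, and the everywhere-in-$x$ issue never arises because the criterion is bilinear and pointwise from the start.
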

Theorem \ref{Bourgain} extends in some sense Bourgain's theorem. For its proof, we will follow Bourgain's method. We will assume also that $T$ is weak-mixing and there are infinitely  set of primes $q$ such that $q$ does not divide $\Big\{(p_n-1)h_n+\ds \sum_{j=0}^{p_n-1}s(n,j)\Big\}$.





We start by pointing out that it is enough to verify that M\"obius orthogonality holds for the dense subset of functions. Indeed, we have the following proposition.

\begin{Prop} Let $X$ be a compact set and $\F$ be a family of continuous functions on $X$. Assume that
	\begin{enumerate}[label=(\alph*)]
		\item $\F$ is dense in $C(X)$.
		\item For any $f \in \F$, for any $x \in X$, the sequence $\bigl(f(T^nx)\bigr)_{n\geq1}$ is orthogonal to the M\"obius function.
	\end{enumerate}
	Then, for any $f \in C(X)$, for any $x \in X$, the sequence $\bigl(f(T^nx)\bigr)_{n\geq1}$ is orthogonal to the M\"obius function.
\end{Prop}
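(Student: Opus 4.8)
The plan is to prove the statement by a standard density and uniform-boundedness argument. The key observation is that the M\"obius function is uniformly bounded ($|\mob(n)|\le 1$), so the linear functionals $f \mapsto \frac{1}{N}\sum_{n\le N}\mob(n)f(T^nx)$ are uniformly bounded (by $\|f\|_\infty$) on $C(X)$, uniformly in $N$ and in $x$. This uniform boundedness is exactly what lets us pass from a dense subset to the whole space, and I expect that to be the crux of the argument, though it is quite routine once phrased correctly.

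First I would fix an arbitrary $f \in C(X)$ and an arbitrary $x \in X$, together with $\varepsilon > 0$. Using hypothesis (a), I would choose $g \in \F$ with $\|f - g\|_\infty < \varepsilon$. Then I would split the average using the triangle inequality:
\[
\Big|\frac1N\sum_{n\le N}\mob(n)f(T^nx)\Big|
\le \Big|\frac1N\sum_{n\le N}\mob(n)(f-g)(T^nx)\Big|
+ \Big|\frac1N\sum_{n\le N}\mob(n)g(T^nx)\Big|.
\]
For the first term on the right, since $|\mob(n)|\le 1$ and $\|f-g\|_\infty<\varepsilon$, I would bound it crudely by
\[
\frac1N\sum_{n\le N}|f-g|(T^nx) \le \|f-g\|_\infty < \varepsilon,
\]
and this bound holds uniformly in $N$ and in $x$. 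For the second term, hypothesis (b) applied to $g \in \F$ gives that it tends to $0$ as $N\to\infty$.

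Finally I would conclude: taking the limit superior as $N\to\infty$, the second term vanishes and we are left with
\[
\limsup_{N\to\infty}\Big|\frac1N\sum_{n\le N}\mob(n)f(T^nx)\Big| \le \varepsilon.
\]
Since $\varepsilon > 0$ was arbitrary, the limit superior is zero, hence the average converges to $0$, which is precisely the orthogonality of $\bigl(f(T^nx)\bigr)_{n\ge1}$ to the M\"obius function. The main (and only) subtlety is ensuring that the approximation bound on the first term is genuinely uniform in both $N$ and $x$, so that the choice of $g$ depends only on $f$ and $\varepsilon$ and not on $N$ or $x$; this is guaranteed by the $\sup$-norm approximation combined with the bound $|\mob(n)|\le1$. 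No compactness beyond continuity on $X$ is actually needed for the argument itself.
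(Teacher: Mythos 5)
Your proof is correct: it is the standard density/uniform-boundedness argument, and it is precisely the routine argument the paper has in mind when it dismisses the proof as ``Straightforward.'' The only point worth noting is that compactness of $X$ is what guarantees $\|f-g\|_\infty$ is finite and that density in $C(X)$ means uniform-norm density, which your argument implicitly uses.
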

\begin{proof}Straightforward.
\end{proof}

Now, we will recall some basic facts on the notion of affinity. This notion is at the heart of Bourgain's approach.

\paragraph{\bf The affinity between two probabilities.}
Let $\Pro$ and $\Q$ be probability measures on measurable space $(X,\A)$. The affinity or Hellinger integral $H(\Pro,\Q)$ is defined by the formula
\[
\bigintss_{X}\sqrt{\frac{d\Pro}{d\R}.\frac{d\Q}{d\R}} d\R.
\]
The most important property of affinity needed it here is the following.

\[
H(\Pro,\Q)=0 \Longleftrightarrow \Pro \perp \Q.
\]
We shall need also the following classical lemma due to Coquet-Mend\'es-France-Kamae \cite{CMFK}.
\begin{lem}\label{CoquetMFK}
	Let $(\Pro_n)$ and $(\Q_n)$ be two sequences of probability measures
	on the circle weakly converging to the probability measures $\Pro$ and $\Q$ respectively. Then
	\begin{equation}\label{limsup}
	\limsup_{n \longrightarrow +\infty } H(\Pro_n,\Q_n) \leq H(\Pro,\Q).
	\end{equation}
\end{lem}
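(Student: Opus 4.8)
The plan is to prove the inequality through the variational (partition) characterization of the affinity, which converts the Hellinger integral into a quantity that manifestly respects weak convergence. Fix a common dominating measure, say $\R=\Pro+\Q$, and write $p=\frac{d\Pro}{d\R}$, $q=\frac{d\Q}{d\R}$, so that $q=1-p$ holds $\R$-almost everywhere and $H(\Pro,\Q)=\int_{\T}\sqrt{pq}\,d\R$. The first step is to record the elementary inequality, valid for \emph{every} finite Borel partition $\mathcal{P}=\{A_1,\dots,A_k\}$ of the circle,
\[
H(\Pro,\Q)\leq \sum_{i=1}^{k}\sqrt{\Pro(A_i)\,\Q(A_i)},
\]
which follows from the Cauchy--Schwarz inequality applied on each block $A_i$ (equivalently, from Jensen's inequality for the concave function $t\mapsto\sqrt{t(1-t)}$). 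The second step is to show this bound is sharp: taking $\mathcal{P}$ to be the partition into the level sets $\{x:\ p(x)\in[\tfrac{j}{m},\tfrac{j+1}{m})\}$, the oscillation of $p$ on each block is at most $1/m$, and uniform continuity of $t\mapsto\sqrt{t(1-t)}$ on $[0,1]$ gives, with modulus $\omega$,
\[
\sum_{i}\sqrt{\Pro(A_i)\,\Q(A_i)}\leq H(\Pro,\Q)+\omega(1/m)\xrightarrow[m\to\infty]{}H(\Pro,\Q),
\]
so that $H(\Pro,\Q)=\inf_{\mathcal{P}}\sum_{i}\sqrt{\Pro(A_i)\,\Q(A_i)}$.

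The key to transferring this identity to the weak limit is that only \emph{finitely many} blocks occur and that we are free to take them to be continuity sets for the limit measures. Concretely, given $\varepsilon>0$, I would first pick a finite partition whose sum is at most $H(\Pro,\Q)+\varepsilon/2$, and then approximate each (Borel) block from the previous step by a finite union of arcs whose endpoints avoid the at most countable set of atoms of $\Pro+\Q$. Since such arcs generate the Borel $\sigma$-algebra and $\Pro,\Q$ are Radon, these approximations can be achieved with arbitrarily small $(\Pro+\Q)$-symmetric difference; disjointifying and using the continuity of $(s,t)\mapsto\sqrt{st}$ then produces a partition $\{A_1,\dots,A_k\}$ into continuity sets (arcs) with $\sum_i\sqrt{\Pro(A_i)\,\Q(A_i)}\leq H(\Pro,\Q)+\varepsilon$. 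Because each $A_i$ is a continuity set, the Portmanteau theorem yields $\Pro_n(A_i)\to\Pro(A_i)$ and $\Q_n(A_i)\to\Q(A_i)$. Combining the universal bound of the first step (applied now to $\Pro_n,\Q_n$) with the fact that a \emph{finite} sum of square roots passes to the limit, I obtain
\[
\limsup_{n\to\infty}H(\Pro_n,\Q_n)\leq \lim_{n\to\infty}\sum_{i=1}^{k}\sqrt{\Pro_n(A_i)\,\Q_n(A_i)}=\sum_{i=1}^{k}\sqrt{\Pro(A_i)\,\Q(A_i)}\leq H(\Pro,\Q)+\varepsilon,
\]
and letting $\varepsilon\to0$ finishes the proof.

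I expect the main obstacle to be the approximation inside the second step: simultaneously making the partition sum close to $H(\Pro,\Q)$ \emph{and} forcing the blocks to be continuity sets, since the level sets that naturally realize the infimum are arbitrary Borel sets rather than arcs. The delicate point is to bound the change in the partition sum under the arc-approximation — this is precisely where the uniform continuity of $\sqrt{\cdot}$ and the regularity of the measures on $\T$ enter — while ensuring that after disjointification one still has a genuine partition into continuity sets. Everything else, namely the Cauchy--Schwarz/Jensen bound and the Portmanteau passage to the limit for a \emph{fixed} finite partition, is routine.
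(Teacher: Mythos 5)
Your argument is correct, but there is nothing in the paper to compare it with line by line: the paper states this lemma as a classical result and cites \cite{CMFK} without giving any proof, so what you have produced is a self-contained proof of a quoted black box. Both pillars of your argument are sound: Cauchy--Schwarz on each block gives $H(\Pro,\Q)\le\sum_i\sqrt{\Pro(A_i)\,\Q(A_i)}$ for every finite Borel partition, and the level-set partition of $p=\frac{d\Pro}{d(\Pro+\Q)}$ shows that the infimum over partitions equals $H(\Pro,\Q)$; continuity-set partitions plus the Portmanteau theorem then transfer the bound to the weak limits. The one point you flagged as delicate does work, but deserves an explicit remark: when you move the endpoints of the approximating arcs onto non-atoms of $\Pro+\Q$, you must perturb each endpoint in the direction for which the symmetric difference is a half-open arc \emph{not containing} the original endpoint (for an arc $[a,b)$, move both $a$ and $b$ to the left); then the difference decreases to the empty set and its $(\Pro+\Q)$-measure tends to $0$, whereas in the opposite direction the difference closes down on the endpoint and picks up the mass of any atom sitting there. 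Since non-atoms are dense this choice is always available, the disjointified blocks stay in the algebra of finite unions of arcs with non-atomic endpoints (hence remain continuity sets), and your uniform-continuity estimate over the fixed $k$ blocks finishes the argument. For comparison, the shortest standard route replaces Borel partitions by continuous partitions of unity $f_1,\dots,f_k\ge 0$ with $\sum_i f_i=1$: Cauchy--Schwarz gives $H(\Pro_n,\Q_n)\le\sum_i\sqrt{\Pro_n(f_i)\,\Q_n(f_i)}$, weak convergence applies verbatim to each continuous $f_i$ with no Portmanteau or atom bookkeeping, and sharpness of the infimum is obtained by smoothing indicators; your version trades that smoothing step for the measure-theoretic approximation you carried out, and both routes are legitimate.
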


\paragraph{\bf Spectral measures.}
Let $(X,\A,\Pro,T)$ ba a ergodic dynamical system. It is well known that the spectral measure of any $L^2$-function $f$ can be defined as the weak limit of the following sequence of finite measures
\[
\sigma_{f,T,N}=\Big|\frac{1}{\sqrt{N}}\sum_{j=0}^{N-1}f(T^nx) e^{i n \theta}\Big|^2 d\theta.
\]
The weak limit $\sigma_f$ holds for almost all points $x$. If in addition $X$ is compact and the system $(X,\A,\Pro,T)$ is uniquely ergodic, then the weak limit holds for any $x$ provided that the function $f$ is continuous.\\

We denote by $\sigma_p$ the push-forward measure of $\sigma$ under the map $z \mapsto z^p$.\\

We introduce also the notion of the pseudo-dilation of a measure. It is an easy exercise to see that for any $\phi$ in $L^1(\T)$ and $m \in \Z^*$,
if $\phi_{(m)}(t)=\phi(mt)$, for any $t \in \T$, then, for any $n \in  \Z$, we have
\[
\widehat{\phi_{(m)}}(n)=\left\{
\begin{array}{ll}
0, & \hbox{if $n \nmid  m$;} \\
\widehat{\phi}(\frac{n}{m}), & \hbox{if $n | m$.}
\end{array}
\right.
\]
We extend this fact to the finite measure $\sigma$ on the torus by putting
\[
\widehat{\sigma_{(m)}}(n)=\left\{
\begin{array}{ll}
0, & \hbox{if $n \nmid m$;} \\
\widehat{\sigma}(\frac{n}{m}), & \hbox{if $n | m$.}
\end{array}
\right.
\]
Therefore we have the following lemma \cite[p.7]{Kat},
\begin{lem}\cite{Kat}\label{dilation}Let $\sigma_k= f_k(t) dt$ be a sequence of finite measure on the torus which converges weakly to
	$\sigma$. Then $\sigma_{(m)}$ is the weak limit of $\sigma_{k,(m)}=f_k(mt) dt$
\end{lem}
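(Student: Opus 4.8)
The plan is to reduce the assertion to the convergence of Fourier coefficients, which is the natural language for the pseudo-dilation. Recall that for a sequence of finite positive measures on $\T$ with uniformly bounded total masses, weak convergence is equivalent to the convergence of every individual Fourier coefficient. The forward direction is immediate upon testing against the continuous characters; the converse follows from the density of trigonometric polynomials in $C(\T)$ together with the uniform mass bound, which lets one approximate an arbitrary $g \in C(\T)$ uniformly by a trigonometric polynomial and then control the resulting error uniformly in $k$.

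First I would compute the Fourier coefficients of $\sigma_{k,(m)} = f_k(mt)\,dt$. This is precisely the change-of-variables computation recorded just before the lemma: substituting $u = mt$ and using the $1$-periodicity of $f_k$ one finds that $\widehat{\sigma_{k,(m)}}(n)$ vanishes unless $m \mid n$, in which case it equals $\widehat{\sigma_k}(n/m)$. In other words $\sigma_{k,(m)}$ is exactly the pseudo-dilation $(\sigma_k)_{(m)}$ in the sense defined above.

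Next I would pass to the limit one coefficient at a time. Weak convergence $\sigma_k \to \sigma$ gives $\widehat{\sigma_k}(j) \to \widehat{\sigma}(j)$ for each fixed $j \in \Z$. Hence, for each fixed $n$, either $m \mid n$ and $\widehat{\sigma_{k,(m)}}(n) = \widehat{\sigma_k}(n/m) \to \widehat{\sigma}(n/m) = \widehat{\sigma_{(m)}}(n)$, or $m \nmid n$ and both sides equal $0 = \widehat{\sigma_{(m)}}(n)$; in either case $\widehat{\sigma_{k,(m)}}(n) \to \widehat{\sigma_{(m)}}(n)$, matching the defining Fourier coefficients of $\sigma_{(m)}$. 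To invoke the converse half of the equivalence I must verify the uniform mass bound, and this is automatic: $\sigma_{k,(m)}(\T) = \widehat{\sigma_{k,(m)}}(0) = \widehat{\sigma_k}(0) = \sigma_k(\T) \to \sigma(\T)$, hence the masses are bounded. Therefore $\sigma_{k,(m)} \to \sigma_{(m)}$ weakly; as a byproduct this shows that $\sigma_{(m)}$, a priori only specified through its Fourier coefficients, is a genuine finite positive measure, being the weak limit of the positive measures $\sigma_{k,(m)}$.

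The only point requiring care is the equivalence invoked in the first paragraph: recovering weak convergence from coefficientwise convergence genuinely uses the uniform bound on the masses, and the identity $\widehat{\sigma_{k,(m)}}(0) = \widehat{\sigma_k}(0)$ is exactly what supplies it. Beyond that, the argument is the elementary dilation computation combined with a coefficientwise limit, so I expect no further obstacle.
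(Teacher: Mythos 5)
Your proof is correct and is essentially the intended one: the paper itself gives no proof of this lemma (it simply cites Katznelson), and its surrounding text --- the coefficient formula for dilations of $L^1$ functions and the purely coefficientwise definition of $\sigma_{(m)}$ --- sets up exactly the Fourier-coefficient argument you carry out. One small logical point deserves attention: the converse half of the equivalence, in the form you state it (coefficientwise convergence plus a uniform mass bound implies weak convergence), presupposes that the target $\sigma_{(m)}$ is already known to be a finite positive measure, so obtaining that fact as a ``byproduct'' of the convergence is circular as written. The fix is standard and costs one line: either extract a weak-$*$ cluster point of the mass-bounded sequence $\sigma_{k,(m)}$ by Banach--Alaoglu and identify it through its Fourier coefficients, which is the form in which Katznelson actually states the equivalence (the limit measure's existence is part of the conclusion, not a hypothesis); or invoke the explicit formula $\sigma_{(m)}=\frac1m\sum_{j=0}^{m-1}\sigma_{1/m}\ast\delta_{j/m}$ that the paper records immediately after the lemma, which exhibits $\sigma_{(m)}$ directly as a finite positive measure with the prescribed coefficients. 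With either patch your argument is complete and coincides with the proof the citation points to.
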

From this we deduce the following proposition. 
\begin{Prop}Let $(X,\A,\Pro,T)$ be a uniquely ergodic system, $f$ be a continuous function and $p$ be a positive integer. Then, the sequence of measures given by
	\[
	\sigma_{f,(p),N}= \Big|\frac{1}{\sqrt{N}}\sum_{j=0}^{N-1}f(T^nx) e^{i p n t}\Big|^2 dt,
	\]
	converges to the pseudo-dilation of the spectral measure of $f$ by $p$, that is, $\sigma_{f, (p)}$.
\end{Prop}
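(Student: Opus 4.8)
The plan is to deduce this directly from the dilation Lemma~\ref{dilation}, combined with the weak convergence of the finite measures $\sigma_{f,T,N}$ to the spectral measure $\sigma_f$, which for a uniquely ergodic compact system and a continuous function $f$ holds at \emph{every} point $x$. The argument has essentially three steps, and the only real content is the bookkeeping that identifies $\sigma_{f,(p),N}$ as a pseudo-dilation of $\sigma_{f,T,N}$.

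First I would record the densities explicitly. Writing
\[
g_N(\theta)=\Big|\frac{1}{\sqrt{N}}\sum_{n=0}^{N-1}f(T^nx)e^{in\theta}\Big|^2,
\]
we have $\sigma_{f,T,N}=g_N(\theta)\,d\theta$, a finite positive measure since $g_N\ge 0$ is integrable, and by hypothesis these measures converge weakly to $\sigma_f$ for the given $x$. The key observation is that, because $e^{ipnt}=e^{in(pt)}$, the density of $\sigma_{f,(p),N}$ is obtained from $g_N$ by the substitution $\theta=pt$, namely
\[
\sigma_{f,(p),N}=\Big|\frac{1}{\sqrt{N}}\sum_{n=0}^{N-1}f(T^nx)e^{in(pt)}\Big|^2\,dt=g_N(pt)\,dt.
\]
In the notation of Lemma~\ref{dilation} this is precisely $(\sigma_{f,T,N})_{(p)}$, the pseudo-dilation of $\sigma_{f,T,N}$ by $p$.

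It then remains to invoke Lemma~\ref{dilation} with $\sigma_k=\sigma_{f,T,N}$ (density $f_k=g_N$) and $m=p$. Since $\sigma_{f,T,N}\to\sigma_f$ weakly, the lemma yields that $\sigma_{f,(p),N}=g_N(pt)\,dt$ converges weakly to $(\sigma_f)_{(p)}=\sigma_{f,(p)}$, which is exactly the claim.

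There is no serious obstacle here: the statement is a direct application of the dilation lemma. The one point deserving attention — and where the hypotheses are genuinely used — is guaranteeing that the weak convergence $\sigma_{f,T,N}\to\sigma_f$ holds for the \emph{specific} point $x$ in the statement rather than merely almost everywhere. This is supplied precisely by unique ergodicity together with the continuity of $f$, as recorded in the discussion of spectral measures above, and I would state that dependence explicitly so that the convergence is genuinely pointwise in $x$.
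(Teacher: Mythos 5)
Your proposal is correct and follows exactly the route the paper intends: identify $\sigma_{f,(p),N}$ as the pseudo-dilation by $p$ of $\sigma_{f,T,N}$, then apply Lemma~\ref{dilation} to the weak convergence $\sigma_{f,T,N}\to\sigma_f$, which holds at every point $x$ by unique ergodicity and continuity of $f$. The paper leaves this deduction implicit (``From this we deduce the following proposition''), and your write-up supplies precisely the bookkeeping it has in mind, including the correct justification for pointwise-in-$x$ convergence.
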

The measure $\sigma_{(p)}$ can be obtained by taking the image of $\sigma$ under the map $x\mapsto\frac1px$, i.e.\ the measure $\sigma_{1/p}$, and then repeating this new measure periodically in intervals $[\frac jp,\frac{j+1}p)$, that is:
$$
\sigma_{(p)}:=\frac1p\sum_{j=0}^{p-1}\left(\tau_{j/p}\right)_\ast\sigma_{1/p}=
\frac1p\sum_{j=0}^{p-1}\sigma_{1/p}\ast\delta_{j/p},$$
where $\tau_y(x)=x+y$. Clearly, $\sigma_{(p)}$ is invariant under  rotation by $1/p$, i.e.
\begin{align}\label{zi1a}
(\tau_{1/p})_\ast\sigma_{(p)}=\sigma_{(p)}.
\end{align}
The following two relations follow directly (see \cite{Qu}, p.\ 196):
\begin{align}\label{zi2}
\left(\sigma_{(p)}\right)_p=\sigma,
\end{align}
\begin{align}\label{zi3}
\left(\sigma_{p}\right)_{(p)}=\frac1p\sum_{j=0}^{p-1}\sigma\ast\delta_{j/p}.
\end{align}
We further have the following lemma due to Jean-Paul Thouvenot.

\begin{lem}
	Assume that $\sigma$ and $\eta$ are two probability measures on the circle. 
	\begin{enumerate}[label=(\roman*)] 
		\item  If $\sigma_p\perp\eta_q$, then $\sigma_{(p)}\perp \eta_{(q)}$; \label{Bi}
		\item Assume $(p,q)=1$. Then $\sigma_p\perp \eta_q$ if and only if   $\sigma_{(p)}\perp\eta_{(q)}$ .
		\label{Bj}
	\end{enumerate}
\end{lem}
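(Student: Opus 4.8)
The whole argument runs through the criterion $H(\Pro,\Q)=0\iff \Pro\perp\Q$, so throughout I would translate every statement about mutual singularity into the vanishing of the affinity. The structural input I would isolate first is the \emph{single-modulus equivalence}: for a fixed $r$, one has $\mu\perp\nu$ if and only if $\mu_{(r)}\perp\nu_{(r)}$. One direction is immediate from \eqref{zi2}: since $(\mu_{(r)})_r=\mu$, any Borel set $A$ with $\mu(A)=1,\ \nu(A)=0$ pulls back through $z\mapsto z^{r}$ to a set witnessing $\mu_{(r)}\perp\nu_{(r)}$. For the converse I would take a witnessing set $S$ for $\mu_{(r)}\perp\nu_{(r)}$ and symmetrize it over the group of $r$-th roots of unity; this is legitimate because both $\mu_{(r)}$ and $\nu_{(r)}$ are invariant under rotation by $1/r$, which is precisely \eqref{zi1a}. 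The symmetrized set is a full union of fibres, hence a pullback $m_r^{-1}(B)$, and pushing it forward by $z\mapsto z^{r}$ returns a witness for $\mu\perp\nu$.

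For part \ref{Bi} the plan is to transport the singularity of the push-forwards up to the dilations using \eqref{zi2} twice. Writing $\sigma=(\sigma_{(p)})_p$ and $\eta=(\eta_{(q)})_{q}$, a set separating $\sigma_p$ from $\eta_q$ can be lifted through the covering maps $z\mapsto z^{p}$ and $z\mapsto z^{q}$, and the invariance \eqref{zi1a} under the respective root-of-unity rotations should let me arrange the two lifts to be compatible, so that a single Borel set carries full $\sigma_{(p)}$-mass and zero $\eta_{(q)}$-mass at once. The point to get right is that the two distinct scales $p$ and $q$ must be reconciled on a common refinement; here I expect to lean on the monotonicity of the affinity under the coarsening $z\mapsto z^{m}$ (push-forward can only increase $H$) to bound the cross terms and keep the bookkeeping finite.

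For part \ref{Bj} the hypothesis $(p,q)=1$ is exactly what I would use to synchronize the two scales. The two identities I would establish first are the composition rule $(\sigma_{(p)})_{(q)}=\sigma_{(pq)}$, which reads off immediately from the Fourier description of the pseudo-dilation and needs no coprimality, and the commutation $(\eta_{(q)})_{p}=(\eta_{p})_{(q)}$, which holds \emph{precisely} when $(p,q)=1$ because then $z\mapsto z^{p}$ is injective modulo the $q$-th roots of unity. Combining these, I would dilate the hypothesis $\sigma_p\perp\eta_q$ up to the common modulus $pq$, invoke the single-modulus equivalence of the first paragraph, and read off $\sigma_{(p)}\perp\eta_{(q)}$, with the reverse implication obtained by running the same identities backwards. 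The genuine obstacle, and the reason coprimality is indispensable, is the converse passage from singular dilations to singular push-forwards: projection by $z\mapsto z^{p}$ generally destroys mutual singularity, so what must be proved is that the subgroups of $p$-th and $q$-th roots of unity, which intersect trivially and generate the $pq$-th roots of unity, are transverse enough to prevent the two dilations from overlapping after projection. Making this transversality rigorous, rather than the affinity accounting, is where I expect the real work to lie.
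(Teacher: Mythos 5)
Your opening reduction (the single-modulus equivalence $\mu\perp\nu\Longleftrightarrow\mu_{(r)}\perp\nu_{(r)}$, proved by pulling back through $z\mapsto z^{r}$ and symmetrizing a witness set over the $r$-th roots of unity via \eqref{zi1a}) is correct, and so are the two identities you isolate for part (ii): $(\sigma_{(p)})_{(q)}=\sigma_{(pq)}$, and $(\eta_{(q)})_{p}=(\eta_{p})_{(q)}$ when $(p,q)=1$. The gap is in the gluing, and it is fatal in both parts. For part (i) you lift a set separating $\sigma_p$ from $\eta_q$ through the two \emph{different} maps $z\mapsto z^{p}$ and $z\mapsto z^{q}$ and hope to ``reconcile'' the lifts; but mutual singularity only descends through a \emph{common} map ($F_\ast\mu\perp F_\ast\nu$ for a single $F$ forces $\mu\perp\nu$, whereas $F_\ast\mu\perp G_\ast\nu$ with $F\neq G$ says nothing about $\mu$ versus $\nu$), and the ``monotonicity of affinity'' you invoke is exactly this one-map statement, so it cannot reconcile two distinct lifts. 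The missing ingredient is the single identity the paper's proof runs on: pushing forward by the one map $z\mapsto z^{pq}$ gives, by \eqref{zi2}, $(\sigma_{(p)})_{pq}=\sigma_q$ and $(\eta_{(q)})_{pq}=\eta_p$, after which (i) is a one-line data-processing argument --- but note the moduli come out \emph{exchanged}: the hypothesis that feeds this argument is $\sigma_q\perp\eta_p$, not $\sigma_p\perp\eta_q$.

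That exchange is not bookkeeping you can suppress: the matching-index implication you set out to prove is false. Take $p=2$, $q=3$, $\sigma=\delta_{1/3}$, $\eta=\delta_0$. Then $\sigma_2=\delta_{2/3}$ is singular to $\eta_3=\delta_0$, yet $\sigma_{(2)}=\tfrac12\bigl(\delta_{1/6}+\delta_{2/3}\bigr)$ and $\eta_{(3)}=\tfrac13\bigl(\delta_0+\delta_{1/3}+\delta_{2/3}\bigr)$ share the atom at $2/3$, so $\sigma_{(2)}\not\perp\eta_{(3)}$. (The printed lemma carries this index slip; the paper's own proof, read with consistent indices --- see \eqref{zix}, where $\sigma_q$ and $\eta_p$ appear --- establishes the crossed statement $\sigma_q\perp\eta_p\Longleftrightarrow\sigma_{(p)}\perp\eta_{(q)}$, and the slip is harmless in the intended application, where $\sigma=\eta$.) Your plan for part (ii) collapses at the same point: your commutation identity ties $\sigma_p$ to $\sigma_{(q)}$, since $(\sigma_p)_{(q)}=(\sigma_{(q)})_p$, so chasing your chain honestly (single-modulus equivalence at level $pq$, then \eqref{zi3} together with the two rotation invariances from \eqref{zi1a}) yields $\sigma_p\perp\eta_q\Longleftrightarrow\sigma_{(q)}\perp\eta_{(p)}$, with the dilation moduli exchanged relative to what you claim to ``read off.'' In short, your correct ingredients assemble into a proof of the crossed statement only; the final step of each part, as written, asserts a conclusion that does not follow from them and is in fact false.
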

\begin{proof}The first part \eqref{Bi} follows easily form \eqref{zi2}.
	Indeed, this gives
	$$(\sigma_{(p)})_{pq}=\sigma_q \;\; \textrm{and} \;\; (\sigma_{(q)})_{pq}=\sigma_p.$$
	For the second part \ref{Bj},  Suppose that $(p,q)=1$ and $\sigma_{p})\not\perp\sigma_{q}$. Then
	\begin{align}\label{zi4a}
	\left(\sigma_{p})\right)_{\frac1{pq}}
	\not\perp\left(\eta_{q})\right)_{\frac1{pq}}.
	\end{align}
	If $A\subset[0,\frac1{pq})$ is Borel then
	$$
	\left(\sigma_p\right)_{\frac1{pq}}(A)=\sigma_p(pqA)=\sigma_p(p(qA))=$$
	$$
	\sigma\left(\bigcup_{j=0}^{p-1}\left(qA+\frac jp\right)\right)=
	\sum_{j=0}^{p-1}\sigma\left(qA+\frac jp\right)$$
	as $qA\subset[0,\frac1p)$. We claim that
	\begin{align}\label{zi6}
	\left(\sigma_{p}\right)_{\frac1{pq}}\ll\frac1p\sum_{j=0}^{p-1}\sigma_{(q)}\ast\delta_{\frac j{pq}}.\end{align}
	Indeed, since the support of $\left(\sigma_{(p)}\right)_{\frac1{pq}}$ is contained in $[0,\frac1{pq})$, fix
	a Borel set $A\subset[0,\frac1{pq})$ and suppose that
	$\sigma_q\ast\delta_{\frac j{pq}}(A)=0$. Then
	$$
	\sigma_q\ast\delta_{\frac j{pq}}(A)=\frac1q\sum_{k=0}^{q-1}\sigma^{1/q}\left(\left(A+\frac j{pq}\right)+\frac kq\right)=
	\sigma(qA+\frac jp)$$
	whence \eqref{zi6} follows. In view of \eqref{zi4a} and \eqref{zi6}, we obtain
	\begin{align}\label{zix}
	\sigma_q\ast\delta_{\frac j{pq}}\not \perp
	\eta_p\ast\delta_{\frac k{pq}}
	\end{align}
	for some $0\leq j\leq p-1$ and $0\leq k\leq q-1$. Since $(p,q)=1$, we have $j=ap+bq$ and $k=cp+dq$. Substituting this to~\eqref{zix}, we obtain
	$$
	\sigma_{(q)}\ast\delta_{\frac a{q}}\ast\delta_{\frac bp}\not \perp
	\eta_{(p)}\ast\delta_{\frac cq}\ast\delta_{\frac dp}.$$ Hence
	$$
	\sigma_{(q)}\ast\delta_{\frac {a-c}{q}}\not \perp
	\eta_{(p)}\ast\delta_{\frac {d-b}p}$$
	and the result follows from \eqref{zi1a}. For the converse, let $A,B\subset [0,1)$ be measurable sets with $A\cap B=\emptyset$ and $\sigma_{(p)}(A)=\eta_{(q)}(B)=1$. We may assume that $A=\bigcup_{i=0}^{q-1}(A'+\frac{i}{q})$ and $B=\bigcup_{j=0}^{p-1}(B'+\frac{j}{p})$, where $A'\subset [0,\frac{1}{q})$, $B'\subset [0,\frac{1}{p})$. Let $A'':=qA'$, $B'':=pB'$. Then $\sigma(A'')=\eta(B'')=1$. We claim that
	\begin{equation}\label{a:1}
	pA''\cap(qB''+k)=\emptyset \text{ for any }k\in\Z
	\end{equation}
	(we treat now $A''$, $B''$ as subsets of $\R$). By the definition of $\sigma_{(q)}$, $\eta_{(p)}$, we have $(A'+\frac{i}{q})\cap(B'+\frac{j}{p})$ for all $i,j\in\Z$. Since $(p,q)=1$, for any $\ell\in\Z$ we can find $i,j\in\Z$ such  that $\frac{j}{p}-\frac{i}{q}=\frac{jq-ip}{pq}=\frac{\ell}{pq}$. Therefore, $A'\cap(B'+\frac{\ell}{pq})=\emptyset$ for all $\ell\in\Z$ and it follows that  $pA''\cap (qB''+\ell)=pq A'\cap (pq B'+\ell)=\emptyset$ for all $\ell\in\Z$, i.e.\ \eqref{a:1} indeed holds. It remains to show that \eqref{a:1} implies $\sigma_p\perp\eta_q$. Let $A''':=\bigcup_{i=0}^{p-1}(pA''-i)\cap [0,1)$, $B''':=\bigcup_{j=0}^{q-1}(qB''-j)\cap [0,1)$. Clearly, $\sigma_p(A''')=\eta_q(B''')=1$ and thus, by~\eqref{a:1}, $\sigma_p\perp \eta_q$.
\end{proof}

Let us recall that the maximal spectral type
of $T$ is the equivalence class of Borel measures $\sigma$ on $S^1$
(under the equivalence relation $\mu_1=\mu_2$ if  and only if $\mu_1<<\mu_2$
and $\mu_2<<\mu_1$), such that
$\sigma_f<<\sigma$ for all $f\in L^2(X)$ and
if $\nu$ is
another measure for which $\sigma_f<<\nu$
for all $f\in L^2(X)$ then $\sigma << \nu$.

By the canonical decomposition of $L^2(X)$ into decreasing cycles (see for instance
\cite{P}) with respect to the operator $U_T(f)=f\circ T$,
there exists a Borel measure $\sigma=\sigma_f$ for some $f\in L^2(X)$, such that $\sigma$ is in
the equivalence class defining the maximal spectral type of $T$. By abuse of
notation, we will call this measure the maximal spectral type measure, but
it can be replaced by any other measure in its equivalence class.
As shown by B. Host, F. Mela, and F. Parreau \cite{Host-mela-parreau}, Bourgain \cite{BIsr}, \cite{CN} and \cite{KR},
the maximal spectral type (except possibly some discrete part) of any rank-one map is given by some kind of generalized Riesz products. Precisely, we have the following:

\begin{Prop}\label{rankone-sp} Let $(X,\A,T,\Pro)$ be a rank-one dynamical system with the parameters
	$(p_n)_{n \in \N}$ and $((s_{n,j})_{j=1}^{p_n})_{n \in \N}$. Then the maximal spectral type of $T$ is given (up to discrete part) by the weak limit of the measures $\sigma_N$,$ N = 1, 2, \cdots$ given by the formula
	\[d\sigma_N=\prod_{n=1}^{N}|P_n(t)|^2 dt,\]
	where
	\[
	P_n(t)=\frac1{\sqrt{p_n}}\sum_{j=0}^{p_n-1}{e^{i(jh_n+\widetilde{s}_{n,j})t}}.
	\]
\end{Prop}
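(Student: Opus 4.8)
The plan is to reduce the computation of the maximal spectral type to the spectral measure of one explicit cyclic vector and then to evaluate that spectral measure using the self-similar word structure. I take $f=\mathbbm{1}_{O_{0,0}}$, the indicator of the symbol $0$ at coordinate $0$. By the simplicity of the spectrum of rank-one maps and the fact that the sets $O_{0,k}=T^{-k}O_{0,0}$ generate the Borel $\sigma$-algebra, the spectral type of $f$ is the maximal spectral type away from the atoms coming from the (rational) eigenvalues; this is precisely the ``up to discrete part'' clause, and it is the part imported from Host--Méla--Parreau, Bourgain and Klemes--Reinhold. The genuine work is thus to show that $\sigma_f$ is (a multiple of) the weak limit of the $\sigma_N$.

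The key algebraic step is a factorization of the relevant exponential sum. Writing $\mathrm{pos}_0(W_m)$ for the positions of the symbol $0$ inside $W_m$, the recursion $W_{m}=W_{m-1}1^{s_{m-1,0}}\cdots W_{m-1}1^{s_{m-1,p_{m-1}-1}}$ exhibits every such position uniquely as the start $jh_{m-1}+\widetilde{s}_{m-1,j}$ of a copy of $W_{m-1}$ plus an internal position of a $0$ inside that copy. Since the copies are contiguous of length $h_{m-1}$, the generating polynomial splits, and inducting down to $W_0=0$ gives
\[
\sum_{\ell\in\mathrm{pos}_0(W_m)}e^{i\ell\theta}
=\prod_{n=0}^{m-1}\Big(\sum_{j=0}^{p_n-1}e^{i(jh_n+\widetilde{s}_{n,j})\theta}\Big)
=\prod_{n=0}^{m-1}\sqrt{p_n}\,P_n(\theta).
\]

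Next I evaluate the approximating spectral measures $\sigma_{f,T,N}$ recalled above along the subsequence $N=h_m$ at a point $x$ in the base of the $m$-th column. For such $x$ the orbit $x,Tx,\dots,T^{h_m-1}x$ runs once up the column, so $\mathbbm{1}_{O_{0,0}}(T^jx)=1$ exactly when $j\in\mathrm{pos}_0(W_m)$. Combining this with the factorization yields
\[
\sigma_{f,T,h_m}=\frac{1}{h_m}\Big|\prod_{n=0}^{m-1}\sqrt{p_n}\,P_n(\theta)\Big|^2\,d\theta
=\frac{fr(0,W_m)}{h_m}\prod_{n=0}^{m-1}|P_n(\theta)|^2\,d\theta .
\]
Since $fr(0,W_m)/h_m\to\Pro(O_{0,0})>0$, passing to the weak limit $m\to\infty$ through $\sigma_f=\lim_N\sigma_{f,T,N}$ identifies $\sigma_f$ with $\Pro(O_{0,0})\lim_m\prod_{n=0}^{m-1}|P_n|^2\,dt$. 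This is the asserted Riesz product: the extra initial factor $|P_0|^2$ differs from $\prod_{n\ge1}|P_n|^2$ only by an almost-everywhere positive trigonometric density, hence does not change the measure class, and so the maximal spectral type is exactly the class of $\lim_N\sigma_N$.

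I expect two points to require real care. The first is the maximality-up-to-discrete-part claim itself: one must know that the continuous parts of the spectral measures attached to the successive column bases all coincide — they differ only by the nonnegative densities $\prod_{n<k}|P_n|^2$, which vanish on sets that are null for these atomless measures — and that the rational eigenvalues are split off separately; this is where the cited spectral-theoretic results enter. The second is the existence of the weak limit $\lim_N\sigma_N$ and the interchange of limits above, which rests on the dissociated structure of the factors: every nonzero frequency of $|P_n|^2$ has modulus at least of order $h_n$, while all frequencies of $\prod_{k<n}|P_k|^2$ stay below $h_n$, so that for each fixed $k$ the coefficient $\widehat{\sigma_N}(k)$ stabilizes. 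As a technically safer alternative to evaluating at a single point, the same identification can be made entirely on the Fourier side by computing $\widehat{\sigma_f}(k)=\Pro(O_{0,0}\cap T^{-k}O_{0,0})$ as a normalized count of pairs of $0$'s at distance $k$ in $W_m$ and matching it, via the same factorization, with $\widehat{\sigma_\infty}(k)$; this route avoids any appeal to a generic orbit and adapts more readily to the $\sigma$-finite normalization.
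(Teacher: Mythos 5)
The paper itself contains no proof of this proposition: it is imported wholesale from Host--M\'ela--Parreau, Bourgain, Choksi--Nadkarni and Klemes--Reinhold, and the sentence after the statement merely records what ``the proof'' in those references yields. So your attempt can only be measured against the standard literature argument, and your core algebra is indeed that argument, done correctly: the recursion $W_m=W_{m-1}1^{s_{m-1,0}}\cdots W_{m-1}1^{s_{m-1,p_{m-1}-1}}$ places the $j$-th copy of $W_{m-1}$ at position $jh_{m-1}+\widetilde{s}_{m-1,j}$, which gives $\sum_{\ell\in\mathrm{pos}_0(W_m)}e^{i\ell\theta}=\prod_{n=0}^{m-1}\sqrt{p_n}\,P_n(\theta)$ and $fr(0,W_m)=\prod_{n=0}^{m-1}p_n$; and delegating the ``maximal up to discrete part'' statement to the cited spectral-theoretic results is the same division of labour the paper makes.

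The genuine gap is in your main route, and it occurs exactly in the case this paper is about. You write $\sigma_{f,T,h_m}=\frac{fr(0,W_m)}{h_m}\prod_{n=0}^{m-1}|P_n(\theta)|^2\,d\theta$ and pass to the limit using $fr(0,W_m)/h_m\to\Pro(O_{0,0})>0$. For an \emph{infinite} symbolic rank-one map this is false: the spacers dominate, so $fr(0,W_m)/h_m\to 0$, and the invariant measure is normalized by $\mu(O_{0,0})=1$, i.e., by $fr(0,W_m)$ rather than by $h_m$. Consistently with this, the $\tfrac1N$-normalized empirical spectral measures converge weakly to the \emph{zero} measure almost everywhere: their Fourier coefficients are Birkhoff averages of the $L^1$ function $f\circ T^k\cdot\overline{f}$, and such averages tend to $0$ a.e.\ for a conservative ergodic infinite measure-preserving map, as the paper itself recalls. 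Your identification then degenerates to $0=0\cdot\sigma_\infty$ and identifies nothing. The Fourier-side variant you relegate to a closing remark --- computing $\widehat{\sigma_f}(k)=\mu\bigl(O_{0,0}\cap T^{-k}O_{0,0}\bigr)$ as the number of pairs of $0$'s at distance $k$ in $W_m$ normalized by $fr(0,W_m)$, and matching it with $\widehat{\sigma_\infty}(k)$ via the same factorization --- is therefore not merely ``technically safer'': it is the only one of your two routes that survives in the infinite-measure setting, and it is essentially the proof in the cited references. A secondary point: even in the probability-preserving case, the single point $x$ lying in every column base need not belong to the a.e.\ set on which $\sigma_{f,T,N}\to\sigma_f$; to evaluate along that particular orbit you need either unique ergodicity together with continuity of $f=\mathbbm{1}_{O_{0,0}}$ (and the paper warns that unique ergodicity fails for symbolic rank-one systems with spacers added over the last column), or, once again, the Fourier-coefficient computation.
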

The proof of Proposition \ref{rankone-sp} yields that the spectral type of the rank-one  map is the spectral measure of the indicator function of the cylinder set [0](up to discrete part). For any $n$, put
\[
f_n=\frac{1}{\sqrt{\mu([B_n]}}\mathbbm{1}_{[B_n]},
\]
where $[B_n]$ is the cylinder set. The span $F_n$ generated by the functions ${f_0,\cdots,f_n}$ verify
\begin{enumerate}
	\item $F_n \subset  F_{n+1}$.
	\item Any continuous $\phi$ which depend on finitely many coordinates can be approximated by the functions in $\bigcup F_n$.
\end{enumerate}
Regarding Proposition \ref{rankone-sp}, it is sufficient to establish that  M\"obius orthogonality holds only for any function $f_n$. Notice that the pseudo-dilation measure $\sigma_{(p)}$ is the weak-star limit of 
\[d\sigma_{N,(p)}=\prod_{n=1}^{N}|P_n(pt)|^2 dt,\]
by Lemma \ref{dilation}.

We further need the following lemma   due to Nadkarni \cite[p.154]{Nad}.
\begin{lem}\label{Nad-infinite)} The infinite product
	$$\prod_{l=1}^{+\infty}\big|P_{j_l}(t)\big|^2,$$
	taken over a subsequence $j_1<j_2<\cdots$, represents the maximal spectral type (up to a discrete measure) of some rank-one map. If $j_l \neq l$ for infinitely many $l$, the map acts on an infinite measure space (it is an infinite rank-one map.),   
\end{lem}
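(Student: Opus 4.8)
The plan is to exhibit a concrete rank-one map $T'$ whose sequence of spectral polynomials is exactly $(P_{j_l})_{l\ge1}$, so that Proposition~\ref{rankone-sp} identifies its maximal spectral type (up to a discrete part) with $\prod_{l\ge1}|P_{j_l}|^2$, and then to read off from the same construction when $T'$ preserves an infinite measure. I would build $T'$ by cutting and stacking with cutting parameters $p'_l:=p_{j_l}$ and, denoting by $h'_l$ the height of its $l$-th building block (with $h'_1=1$), spacers
\[
s'(l,j):=\big(h_{j_l}-h'_l\big)+s(j_l,j),\qquad 0\le j\le p_{j_l}-1.
\]
Everything then reduces to two bookkeeping facts, both established by induction on $l$ with the running hypothesis $h'_l\le h_{j_l}$.

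First, this hypothesis makes all spacers nonnegative, and a direct expansion of the exponents shows that the precise value of $h'_l$ drops out:
\[
j\,h'_l+\sum_{k<j}s'(l,k)=j\,h'_l+j\big(h_{j_l}-h'_l\big)+\widetilde{s}_{j_l,j}=j\,h_{j_l}+\widetilde{s}_{j_l,j},
\]
so the $l$-th polynomial of $T'$ equals $P_{j_l}$ exactly. Second, summing the spacers and using the height recursion gives
\[
h'_{l+1}=p_{j_l}h'_l+\sum_{j=0}^{p_{j_l}-1}s'(l,j)=p_{j_l}h_{j_l}+\sum_{j}s(j_l,j)=h_{j_l+1},
\]
and since $j_l+1\le j_{l+1}$ and $(h_n)$ is increasing we obtain $h'_{l+1}=h_{j_l+1}\le h_{j_{l+1}}$, which closes the induction (the base case $h'_1=1\le h_{j_1}$ is immediate). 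Combined with Proposition~\ref{rankone-sp}, this proves the first assertion.

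For the measure I would use the standard criterion that a cutting-and-stacking rank-one map is infinite exactly when the nondecreasing ratio $h'_{l}/\prod_{m<l}p'_m$ is unbounded, equivalently $\mu'(O_{1,0})=+\infty$. With $h'_{l+1}=h_{j_l+1}$ and $p'_m=p_{j_m}$ this ratio factors as
\[
\frac{h'_{l+1}}{\prod_{m=1}^{l}p_{j_m}}=\rho_{j_l+1}\prod_{\substack{1\le m\le j_l\\ m\notin\{j_1,\dots,j_l\}}} p_m,\qquad \rho_n:=\frac{h_n}{\prod_{m=1}^{n-1}p_m},
\]
where $\rho_n$ is the analogous ratio for the original map, which is nondecreasing and hence bounded below by a positive constant. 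Each factor in the product over omitted indices is a cutting parameter, so it is $\ge2$ (stages with $p_m=1$ carry a constant polynomial and may be discarded beforehand). Thus every omitted index at least doubles the ratio, and the ratio diverges as soon as the subsequence omits infinitely many indices. This last step—controlling the height-to-width ratio through the skipped cutting parameters—is the crux of the argument, and it is exactly here that the hypothesis enters: the condition $j_l\neq l$ forces $j_l-l$ to be a positive nondecreasing integer, and the point to verify carefully is that infinitely many skipped stages (i.e.\ $j_l-l\to+\infty$) are what drive $\mu'(O_{1,0})$ to infinity, yielding the infinite rank-one map.
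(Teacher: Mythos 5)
Your construction is sound, and it is the standard way one would prove this lemma; note that the paper itself offers no proof at all here, only a citation to Nadkarni's book, so a construction like yours is precisely what a proof must supply. The spacer choice $s'(l,j)=(h_{j_l}-h'_l)+s(j_l,j)$ does make the $l$-th polynomial of the new map equal to $P_{j_l}$, the recursion $h'_{l+1}=h_{j_l+1}\le h_{j_{l+1}}$ keeps all spacers nonnegative, Proposition~\ref{rankone-sp} then identifies the maximal spectral type (up to a discrete part) with $\prod_{l}\lvert P_{j_l}\rvert^2$, and your factorization of the height-to-width ratio through the omitted cutting parameters, with the resulting lower bound $\rho_1\,2^{\,j_l-l}$, is correct as well.

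The gap is in the final passage from the stated hypothesis to divergence of that ratio, and it cannot be closed as written. Your bound diverges exactly when $j_l-l\to+\infty$, that is, when infinitely many indices are omitted from $\{j_l\}$. The lemma assumes only that $j_l\neq l$ for infinitely many $l$; since $j_l-l$ is nondecreasing, this hypothesis is in fact equivalent to the much weaker assertion that \emph{at least one} index is omitted. The two conditions genuinely differ: for $j_l=l+1$ one has $j_l\neq l$ for every $l$, yet only the index $1$ is skipped, $j_l-l\equiv 1$, and your ratio converges to $p_1\lim_n\rho_n$, which is finite whenever the original map is finite measure preserving (for Chacon's map, $\rho_n\to 3/2$). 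Nor can a cleverer choice of spacers repair this within your scheme: matching the exponents of $P_{j_{l+1}}$ forces $h'_{l+1}+s'(l+1,j)=h_{j_{l+1}}+s(j_{l+1},j)$ for $j\le p_{j_{l+1}}-2$, hence $h'_{l+1}\le h_{j_{l+1}}+\min_{j}s(j_{l+1},j)$, and for $j_l=l+1$ this still yields a bounded ratio, i.e.\ a finite measure map. So what your argument actually proves is the lemma under the hypothesis ``infinitely many indices are omitted'' (equivalently $j_l-l\to+\infty$), which is surely the intended reading of the quoted statement; but your closing sentence treats that condition as if it followed from ``$j_l\neq l$ for infinitely many $l$'', and this implication is false --- it is exactly the point on which the second assertion of the lemma stands or falls.
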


\section{\bf M\"obius disjointness and Bourgain methods}
The Bourgain methods introduced in \cite{B} are based essentially on the Daboussi-Katai-Bourgain-Sarnak-Ziegler Criterium \cite{BSZ} which  can be formulated as follows.

\begin{Th}[D-K-B-S-Z criterion~\cite{BSZ}]\label{KDSBZ}
	Let $(X,T)$ be a flow ($X$ is a compact and $T$ homeomorphism). Let $f$ be a continuous function on $X$ and assume that for  large prime
	numbers $p$ and $q$, $p\neq q$,  we have, for any $x \in X$,
	\[
	\frac1{N}\sum_{j=0}^{N-1}f(T^{pn}x)f(T^{qn}x) \tend{N}{+\infty}0.
	\]
	Then
	\[
	\frac1{N}\sum_{j=1}^{N}\mu(n)f(T^{n}x) \tend{N}{+\infty}0.
	\]
\end{Th}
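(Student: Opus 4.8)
This is the Daboussi--K\'atai orthogonality criterion in the Bourgain--Sarnak--Ziegler form, so the plan is to run the classical correlation-to-M\"obius argument, keeping careful track of the error terms so that they vanish in the correct order. First I would pass to an abstract statement about bounded sequences: since $X$ is compact and $f$ is continuous, $a_n\egdef f(T^nx)$ satisfies $|a_n|\le\|f\|_\infty$, so after normalizing we may assume $|a_n|\le1$, and it suffices to prove $\frac1N\sum_{n\le N}\mob(n)a_n\tend{N}{+\infty}0$. The hypothesis then reads: for all sufficiently large distinct primes $p,q$, the correlation $\frac1N\sum_{n\le N}a_{pn}\overline{a_{qn}}$ tends to $0$ as $N\to+\infty$ (for real-valued $f$ this is exactly the assumed average, and the general case follows by treating real and imaginary parts separately).

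The mechanism is a double counting against a finite set $P$ of large primes. Set $\omega_P(n)\egdef\#\{p\in P:p\mid n\}$ and $L\egdef\sum_{p\in P}\frac1p$. The Tur\'an--Kubilius inequality gives $\sum_{n\le N}\big(\omega_P(n)-L\big)^2\le C\,N\,L$, whence $\frac1N\sum_{n\le N}\big|\omega_P(n)-L\big|\le\sqrt{CL}$ by Cauchy--Schwarz. I would then write
\[
\frac1N\sum_{n\le N}\mob(n)a_n=\frac1{LN}\sum_{n\le N}\mob(n)a_n\,\omega_P(n)-\frac1{LN}\sum_{n\le N}\mob(n)a_n\big(\omega_P(n)-L\big),
\]
so that the second summand is at most $\frac1L\cdot\frac1N\sum_{n\le N}\big|\omega_P(n)-L\big|\le\sqrt{C/L}$ in absolute value, which is small once $L$ is large.

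For the main term I would use multiplicativity. Double counting by $n=pm$ gives $\sum_{n\le N}\mob(n)a_n\,\omega_P(n)=\sum_{p\in P}\sum_{m\le N/p}\mob(pm)a_{pm}$; since $\mob(pm)=-\mob(m)$ whenever $p\nmid m$, and the terms with $p\mid m$ number at most $\sum_{p\in P}N/p^{2}\le NL/p_{\min}$, this equals $-\sum_{p\in P}\sum_{m\le N/p}\mob(m)a_{pm}$ up to an error of size $O(NL/p_{\min})$. Writing $M\egdef N/p_{\min}$ and extending $a_{pm}$ by $0$ when $pm>N$, Cauchy--Schwarz in the variable $m$ yields
\[
\Big|\sum_{p\in P}\sum_{m\le N/p}\mob(m)a_{pm}\Big|\le\sqrt{M}\,\Big(\sum_{m\le M}\Big|\sum_{p\in P}a_{pm}\,\mathbbm{1}_{\{pm\le N\}}\Big|^2\Big)^{1/2}.
\]
Expanding the inner square, the diagonal contributes $\sum_{p\in P}\sum_{m\le N/p}|a_{pm}|^2\le\sum_{p\in P}N/p=NL$, while the off-diagonal is $\sum_{p\ne q}\sum_{m}a_{pm}\overline{a_{qm}}\,\mathbbm{1}_{\{pm,qm\le N\}}=o(N)$ as $N\to+\infty$, because $P$ is a fixed finite set of large primes and each pair correlation tends to $0$ by hypothesis. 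Hence the inner sum is $NL+o(N)$, the right-hand side is $\sqrt{N/p_{\min}}\,\sqrt{NL}\,(1+o(1))=N\sqrt{L/p_{\min}}\,(1+o(1))$, and dividing by $LN$ bounds the main term by $\frac1{\sqrt{L\,p_{\min}}}(1+o(1))$.

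Putting the pieces together, for every fixed finite set $P$ of large primes I obtain $\limsup_{N}\big|\frac1N\sum_{n\le N}\mob(n)a_n\big|\le\frac1{\sqrt{L\,p_{\min}}}+\frac{C'}{p_{\min}}+\sqrt{C/L}$, where the middle term comes from the $p\mid m$ correction after division by $LN$. Since the left-hand side is independent of $P$, I would finish by choosing, say, $P$ to be the primes in $(w,e^{w}]$, so that $p_{\min}=w\to+\infty$ and $L=\sum_{p\in P}\frac1p\to+\infty$ as $w\to+\infty$; all three terms then tend to $0$, forcing the limit to be $0$ for every $x\in X$, since the hypothesis holds for every $x$. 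The delicate point, and the step I expect to be the main obstacle, is the bookkeeping that makes the diagonal contribute $NL$ rather than $N|P|$ --- this comes from summing $m$ over the range $m\le N/p$ rather than $m\le M$ --- together with balancing the three errors so that they all vanish as $L$ and $p_{\min}$ grow; the off-diagonal smallness is invoked only for the finitely many prime pairs drawn from the fixed set $P$, which is precisely why the hypothesis for large primes is enough.
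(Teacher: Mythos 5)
Your proof is correct, and in fact it supplies something the paper does not contain: Theorem~\ref{KDSBZ} is stated in the paper purely as an imported result, with a citation to Bourgain--Sarnak--Ziegler and no proof whatsoever. So there is no internal argument to compare against; the relevant comparison is with the cited source, and there your proof is essentially the canonical mechanism (K\'atai/Daboussi, in the finitary form of BSZ): the decomposition through $\omega_P$, the Tur\'an--Kubilius control of $\frac1N\sum_{n\le N}|\omega_P(n)-L|$, the switch $\mob(pm)=-\mob(m)$ with the $p\mid m$ correction of size $NL/p_{\min}$, and the Cauchy--Schwarz step in which the diagonal contributes $NL$ (because $m$ ranges over $m\le N/p$, as you correctly emphasize) while the off-diagonal is $o(N)$ since only the finitely many prime pairs from the fixed set $P$ are involved. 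The final balancing, with $P$ the primes in $(w,e^w]$ so that $p_{\min}\to\infty$ and $L\to\infty$ by Mertens' theorem, is also the standard closing move, and your three error terms $1/\sqrt{Lp_{\min}}$, $1/p_{\min}$, $\sqrt{C/L}$ are all accounted for correctly.

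One caveat, which concerns the statement rather than the core of your argument: expanding $\bigl|\sum_{p\in P}a_{pm}\mathbbm{1}_{\{pm\le N\}}\bigr|^2$ produces correlations with a complex conjugate, $a_{pm}\overline{a_{qm}}$, whereas the hypothesis as printed in the paper assumes decay of $\frac1N\sum_n f(T^{pn}x)f(T^{qn}x)$ with no conjugation. For real-valued $f$ (the case actually used in the paper) the two coincide and your proof is complete. For complex-valued $f$, however, your parenthetical reduction ``treat real and imaginary parts separately'' does not work: the hypothesis is assumed only for the single function $f$, and decay of the unconjugated correlations of $f$ does not yield the needed correlations of $\re f$ and $\mathrm{Im}\,f$ (nor the conjugated correlations of $f$). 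The clean fix is to state the hypothesis with the conjugate, $\frac1N\sum_{n\le N}f(T^{pn}x)\overline{f(T^{qn}x)}\to0$, as in the original BSZ formulation; with that reading, your argument goes through verbatim for complex $f$.
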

It was observed by el Abdalaoui \& M. Nerurkar that KDBSZ \ref{KDSBZ} is essentially based on the Prime Number Theorem. Precisely, the authors noticed that the PNT implies Daboussi-Katai-Bourgain-Sarnak-Ziegler criterion. But, the converse implication is false. 

Now, let $p \neq q$ be a prime numbers. By Bourgain's observation, we have
\[
\Big| \frac1{\widetilde{N}}\sum_{j=1}^{\widetilde{N}}f(T^{pn}x)f(T^{qn}x) \Big| \leq
\bigintss_{\T}  \Big|\frac{1}{\sqrt{\widetilde{N}}}\sum_{j=1}^{N}f(T^{n}x) e^{i pn \theta}\Big|
\Big|\frac{1}{\sqrt{\widetilde{N}}}\sum_{j=1}^{N}f(T^{n}x) e^{i q n \theta}\Big| d\theta,
\]
where $\widetilde{N}=\frac{N}{\max{\{p,q\}}}$. Therefore,
\[
\Big| \frac1{\widetilde{N}}\sum_{j=1}^{\widetilde{N}}f(T^{pn}x)f(T^{qn}x) \Big| \leq
\bigintss_{\T}  \Big|\frac{1}{\sqrt{\widetilde{N}}}\sum_{j=1}^{N}f(T^{n}x) e^{i pn \theta}\Big|
\Big|\frac{1}{\sqrt{\widetilde{N}}}\sum_{j=1}^{N}f(T^{n}x) e^{i q n \theta}\Big| d\theta,
\]
Hence,
\[
\Big| \frac1{\widetilde{N}}\sum_{j=1}^{\widetilde{N}}f(T^{pn}x)f(T^{qn}x) \Big| \leq
\frac{N}{\widetilde{N}}H(\sigma_{f,(p),N},\sigma_{f,(q),N})
\]
This combined with the Coquet-Mand\'es-France-Kamae Lemma \ref{CoquetMFK} implies
\begin{align}
\limsup\Big| \frac1{\widetilde{N}}\sum_{j=1}^{\widetilde{N}}f(T^{pn}x)f(T^{qn}x) \Big| &\leq
\max\{p,q\}\limsup H(\sigma_{f,(p),N},\sigma_{f,(q),N})\\
&\leq \max\{p,q\} H(\sigma_{f,(p)},\sigma_{f,(q)}).
\end{align}
Therefore we are reduced to the computation of the Hellinger distance between two primes pseudo-dilatation 
of the spectral measure $f$. For that, we shall need to extend some basic facts known in the context of generalized Riesz products.

\section{On the orthogonality of the generalized Riesz products}

The orthogonality of two given Riesz products has been intensively studied since the famous Zymund dichotomy criterion  \cite{Zygmund} (see \cite{elabdal-Nad} for a recent exposition). Peryri\'ere \cite{ Peyriere} was the first to extend the Zygumd criterion of singularity to the case of two Riesz products based on the some set of frequencies. The analogous results have been extended to generalized Riesz products \cite{KS},\cite{Parreau},\cite{Brown}, \cite{Brown-Moran},\cite{Ritter} but under the restriction that the frequencies of the polynomials be dissociate. However, the polynomials arising from the rank-one constructions do not enjoy this property, making their treatment more delicate. Nevertheless, using Peyri\'ere's idea, Klemes and Reinhold  extended the Zygmund criterion in this context. One can extend in a similar way Peyri\'ere's criterion to the generalized Riesz products arising from the rank-one spectral type. According to the previous section, this latter criterion can be used to establish  M\"obius disjointness. Here, we need  the following version of Peyri\`ere criterion as it is stated by G. Brown and E. Hewitt \cite{Brown-Hewitt}.
\begin{lem}\label{Peyr}
	Let $(X,\mathcal{M})$ be a measurable space. Let $\mu_1$ and
	$\mu_2$ be a probability measures
	on it. Suppose that
	there is a sequence $(\phi_n)_{n \in \N}$
	of complex-valued functions
	on X that are in $\mathcal{L}_2(X, \mathcal{M}, \mu_j)$
	$(j=1, 2)$ and for which the following relations hold
	\begin{align}
	&\sum_{ k \geq 0} \sup_{n \in \N^*}\Big\{\Big|\int \phi_{n}(x) \phi_{n+k}(x) d\mu_j- 
	\int \phi_{n}(x) d\mu_j \int \phi_{n+k}(x) d\mu_j\Big|\Big\}<\infty. \label{Br1}\\
	& \sum_{ n \geq 1}\Big|\int \phi_{n}(x)  d\mu_1-\int \phi_{n}(x)  d\mu_2\Big|^2=+\infty. \label{Br2}
	\end{align}
	Then, $\mu_1$ and $\mu_2$ are are mutually singular. 
\end{lem}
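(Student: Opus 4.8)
The plan is to run the classical second-moment argument underlying Peyri\`ere's criterion: from the single sequence $(\phi_n)$ I would manufacture test functions whose averages concentrate, under $\mu_1$ and under $\mu_2$, around two constants that stay a fixed distance apart. Put
\[
b_n=\int \phi_n \, d\mu_1-\int \phi_n \, d\mu_2,\qquad B_N=\sum_{n=1}^{N}|b_n|^2,
\]
so that hypothesis \eqref{Br2} reads exactly $B_N\to+\infty$, and study the normalized combinations $Z_N=\frac1{B_N}\sum_{n=1}^{N}\overline{b_n}\,\phi_n$. By linearity $\int Z_N\,d\mu_j=\frac1{B_N}\sum_{n=1}^N\overline{b_n}\int\phi_n\,d\mu_j$, and therefore
\[
\int Z_N\,d\mu_1-\int Z_N\,d\mu_2=\frac1{B_N}\sum_{n=1}^N\overline{b_n}\,b_n=1
\]
for every $N$. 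This is the role of \eqref{Br2}: the normalization by $B_N$ is precisely the one keeping the gap between the two means equal to $1$.

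Next I would show that $Z_N$ concentrates at its mean under each $\mu_j$ through a variance estimate driven by \eqref{Br1}. Expanding $\mathrm{Var}_{\mu_j}(Z_N)=\int|Z_N|^2\,d\mu_j-\big|\int Z_N\,d\mu_j\big|^2$ as a double sum produces exactly the covariance-type quantities appearing in \eqref{Br1}; writing $\rho_k=\sup_{n}\big|\int\phi_n\phi_{n+k}\,d\mu_j-\int\phi_n\,d\mu_j\int\phi_{n+k}\,d\mu_j\big|$, hypothesis \eqref{Br1} gives $\sum_k\rho_k<\infty$. Using $|\overline{b_n}b_m|\le\frac12(|b_n|^2+|b_m|^2)$ and grouping the off-diagonal terms by their lag $k=|n-m|$, the double sum is dominated by $C\,B_N\sum_k\rho_k$, so that
\[
\mathrm{Var}_{\mu_j}(Z_N)\le \frac{C\sum_k\rho_k}{B_N}\tend{N}{+\infty}0,\qquad j=1,2 .
\]

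Finally I would convert this $L^2$-concentration into mutual singularity by passing to a subsequence. Choose $(N_k)$ along which both $\mathrm{Var}_{\mu_1}(Z_{N_k})$ and $\mathrm{Var}_{\mu_2}(Z_{N_k})$ are at most $2^{-k}$; by Chebyshev's inequality and the Borel--Cantelli lemma, $Z_{N_k}-\int Z_{N_k}\,d\mu_j\to 0$ almost everywhere with respect to $\mu_j$, for $j=1,2$. Setting $W_k=Z_{N_k}-\int Z_{N_k}\,d\mu_2$, the mean computation shows $W_k\to 0$ $\mu_2$-a.e.\ while $W_k\to 1$ $\mu_1$-a.e. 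The sets $\{W_k\to0\}$ and $\{W_k\to1\}$ are disjoint and carry full $\mu_2$- and $\mu_1$-measure respectively, whence $\mu_1\perp\mu_2$.

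The only delicate point is the variance estimate: one must verify that the off-diagonal terms are precisely the covariances controlled by \eqref{Br1} (keeping careful track of the complex conjugates), and that the \emph{uniform-in-$n$} supremum combined with summability over the lag $k$ yields the bound $O(B_N)$ rather than $O(B_N^2)$. It is exactly this $O(B_N)$ against the $B_N^2$ in the denominator that forces the variance to vanish; once it is in hand, the separation of means and the almost-everywhere upgrade are routine.
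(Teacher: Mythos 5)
Your overall strategy is the right one, and it is in fact the only comparison available: the paper gives no proof of this lemma, quoting it as stated by Brown and Hewitt \cite{Brown-Hewitt}, and the argument behind that reference is precisely the second-moment scheme you run --- form $Z_N=\frac1{B_N}\sum_{n\le N}\overline{b_n}\phi_n$, observe that the two means stay at distance $1$, kill the variances by summability of the covariances, then use Chebyshev and Borel--Cantelli along a sparse subsequence to produce disjoint full-measure sets. Your counting is also correct: bounding the off-diagonal sum by $2B_N\sum_k\rho_k$ via $|\overline{b_n}b_m|\le\frac12\bigl(|b_n|^2+|b_m|^2\bigr)$ does give the crucial $O(B_N)$ against the $B_N^2$ in the denominator, and the final separation step is sound.

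However, the point you flag as ``delicate'' and then wave through is a genuine gap --- and it is in fact a defect of the statement as printed, not merely of your write-up. Expanding
\[
\mathrm{Var}_{\mu_j}(Z_N)=\int\Bigl|Z_N-\int Z_N\,d\mu_j\Bigr|^2 d\mu_j
=\frac1{B_N^2}\sum_{n,m\le N}\overline{b_n}\,b_m\Bigl(\int\phi_n\overline{\phi_m}\,d\mu_j-\int\phi_n\,d\mu_j\cdot\overline{\int\phi_m\,d\mu_j}\Bigr)
\]
produces the \emph{conjugated} covariances, whereas \eqref{Br1} as written controls $\int\phi_n\phi_{n+k}\,d\mu_j-\int\phi_n\,d\mu_j\int\phi_{n+k}\,d\mu_j$, with no conjugation. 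For complex-valued $\phi_n$ these are different quantities, and the unconjugated hypothesis is genuinely too weak: on $X=\{-1,1\}^2$ with coordinates $U,V$, take $\phi_n\equiv Z=U+iV$ for every $n$, let $\mu_1$ make $U,V$ i.i.d.\ symmetric $\pm1$, and let $\mu_2$ make them i.i.d.\ $\pm1$ with $\mu_2(U=1)=p\neq\frac12$. A direct computation gives $\int Z^2\,d\mu_j-\bigl(\int Z\,d\mu_j\bigr)^2=0$ for both $j$, so every unconjugated covariance vanishes and \eqref{Br1} holds with sum $0$; moreover $\int Z\,d\mu_1-\int Z\,d\mu_2=-(2p-1)(1+i)\neq 0$, so \eqref{Br2} holds; yet $\mu_1$ and $\mu_2$ are equivalent, not mutually singular. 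So your claim that the double sum yields ``precisely the covariances controlled by \eqref{Br1}'' cannot be repaired as stated. What your argument correctly proves is the Brown--Hewitt lemma in its proper form, where \eqref{Br1} carries conjugate bars, i.e.\ controls $\int\phi_n\overline{\phi_{n+k}}\,d\mu_j-\int\phi_n\,d\mu_j\int\overline{\phi_{n+k}}\,d\mu_j$. That corrected form is also the one the paper actually needs: in the proof of Theorem \ref{Bourgain} one takes $\phi_n(z)=z^{pm_n}$, and Lemma \ref{Karine} controls $\widehat{\alpha}(m_j\pm m_k)$, i.e.\ both the sum and the difference frequencies, so both the conjugated and unconjugated conditions hold there and nothing downstream is affected once the bars are restored.
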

We need also the following lemma from \cite{KR}.

\begin{lem}\label{Karine}Let $(n_j)_{j=0}^{\infty}$ be a sequence satisfying $n_{j+1} \geq n_j+3$ and $d\alpha=\ds \prod_{j=0}^{\infty}\big|P_{n_j}\big|^2$. Then there is a sequence $(m_j)_{j \in \N}$ such that:
	\begin{enumerate}[label=(\roman*)]
		\item $\widehat{\alpha}(\pm m_j)=\frac{1}{p_{n_j}},$
		\item $\widehat{\alpha}(m_j\pm m_k)=\widehat{\alpha}(\pm m_j) \widehat{\alpha}(\pm m_k)$, $\forall j \neq k$. 
	\end{enumerate}
\end{lem}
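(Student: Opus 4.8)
The plan is to read the coefficients $\widehat{\alpha}$ directly off the generalized Riesz product structure, choosing each $m_j$ to be the \emph{largest frequency gap} occurring in $P_{n_j}$. Write $\lambda_{n,a}=ah_n+\widetilde{s}_{n,a}$ for the frequencies of $P_n$, so that $\lambda_{n,0}=0$ and consecutive gaps satisfy $\lambda_{n,a+1}-\lambda_{n,a}=h_n+s_{n,a}\ge h_n$; recall also $h_{n+1}=p_nh_n+\sum_j s_{n,j}$, whence $\lambda_{n,p_n-1}<h_{n+1}$. I set $m_j:=\lambda_{n_j,\,p_{n_j}-1}=(p_{n_j}-1)h_{n_j}+\widetilde{s}_{n_j,\,p_{n_j}-1}$, the maximal difference appearing in $|P_{n_j}(t)|^2=\frac1{p_{n_j}}\sum_{a,b}e^{i(\lambda_{n_j,a}-\lambda_{n_j,b})t}$. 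Since $a\mapsto\lambda_{n_j,a}$ is strictly increasing, this maximum is attained only by $(a,b)=(p_{n_j}-1,0)$, so the coefficient of $e^{im_jt}$ in $|P_{n_j}|^2$ equals $\frac1{p_{n_j}}$ (multiplicity one), while the constant term equals $1$.

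First I would record $\widehat{\alpha}$ as a sum over representations: denoting by $D_l$ the difference set (spectrum) of $|P_{n_l}|^2$ and by $c_l(d)$ the coefficient of $e^{idt}$ in $|P_{n_l}|^2$, one has $\widehat{\alpha}(m)=\sum\prod_l c_l(d_l)$, the sum running over all finitely supported choices $d_l\in D_l$ with $\sum_l d_l=m$. Everything then reduces to controlling which representations of $m_j$, $m_j+m_k$, and $m_j-m_k$ are admissible. The crucial ingredient is a scale-separation estimate. Put $M_l:=\max D_l=m_l<h_{n_l+1}$ and note $\min\bigl(D_l\setminus\{0\}\bigr)\ge h_{n_l}$. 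Using $p_n\ge 2$ (hence $h_{n+1}\ge 2h_n$) together with the hypothesis $n_{l+1}\ge n_l+3$, a short geometric-series computation yields $\sum_{l<L}M_l<\tfrac12 h_{n_L}\le\tfrac12\min\bigl(D_L\setminus\{0\}\bigr)$. I expect this estimate to be the main technical obstacle: the spacing $+3$ is exactly what guarantees that the cumulative mass of all lower scales is dominated by the smallest nonzero frequency of the next scale, so that no cancellation among lower scales can ever reach into the range of a higher one.

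Granting the estimate, the representation analysis is forced. For $\widehat{\alpha}(m_j)$, let $L$ be the top active scale; the inequality rules out both $L>j$ and $L<j$, after which $|m_j-d_j|\le\sum_{l<j}M_l<\tfrac12 h_{n_j}$, combined with the fact that every element of $D_j$ other than $m_j$ lies at distance $\ge h_{n_j}$ from $m_j$ (an immediate consequence of strict monotonicity of $\lambda_{n_j,\cdot}$), forces $d_j=m_j$; the residual relation $\sum_{l<j}d_l=0$ then forces all lower terms to vanish by a further application of the estimate. Hence $\widehat{\alpha}(\pm m_j)=\frac1{p_{n_j}}$, giving (i). For (ii) with $j<k$, the same argument localizes the top scale at $k$ with $d_k=\pm M_k$ (again an extreme, multiplicity-one gap), reduces the remainder to a representation of $m_j$ by scales $<k$, and concludes $d_j=m_j$.

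Multiplying the two nontrivial coefficients $\frac1{p_{n_j}}$ and $\frac1{p_{n_k}}$ against the constant terms $1$ of the remaining factors then gives $\widehat{\alpha}(m_j\pm m_k)=\frac1{p_{n_j}p_{n_k}}=\widehat{\alpha}(\pm m_j)\,\widehat{\alpha}(\pm m_k)$, which is exactly (ii). The whole argument is elementary and self-contained once the separation estimate is in hand; the delicate points are purely the bookkeeping that the extreme gaps $\pm M_l$ occur with multiplicity one and that lower-scale sums stay strictly below the next scale's smallest frequency.
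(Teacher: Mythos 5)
Your proof is correct, but note that the paper itself does not actually prove this lemma: it is quoted from Klemes--Reinhold \cite{KR}, and the paper only records the explicit formula $m_j=h_{n_{j+1}}-h_{n_j}-s(n_j,p_{n_j})$ without argument. Your proposal therefore supplies the proof the paper outsources, and it is in substance the Klemes--Reinhold argument: take $m_j$ to be the top frequency of $P_{n_j}$, prove the scale-separation estimate $\sum_{l<L}M_l<\tfrac12 h_{n_L}\le\tfrac12\min\bigl(D_L\setminus\{0\}\bigr)$ (which does follow, with constant $\tfrac13$ in fact, from $p_n\ge 2$ and $n_{l+1}\ge n_l+3$ via the geometric series you describe), and then show that $m_j$ and $m_j\pm m_k$ each admit exactly one representation as a sum of single-scale differences, with the extreme gaps having multiplicity one. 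Two points of comparison are worth making explicit. First, your $m_j=(p_{n_j}-1)h_{n_j}+\widetilde{s}_{n_j,p_{n_j}-1}=h_{n_j+1}-h_{n_j}-s(n_j,p_{n_j}-1)$ agrees with the paper's formula only if one reads the paper's subscript $n_{j+1}$ as $n_j+1$ (the next stage of the original construction) and its $s(n_j,p_{n_j})$ as the last spacer in one-based indexing. Read literally, with $n_{j+1}$ the next term of the subsequence, the paper's formula is wrong: your own estimate gives $\sum_{l\le j}M_l<\tfrac12 h_{n_{j+1}}$ while that $m_j$ would satisfy $m_j\ge\tfrac34 h_{n_{j+1}}$, so no representation could use only scales $\le j$, and for generic spacers one would get $\widehat{\alpha}(m_j)=0\neq 1/p_{n_j}$; so your identification of $m_j$ as the maximal frequency of $P_{n_j}$ is the correct reading and clarifies the paper's statement. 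Second, in a final write-up two sketched steps deserve a line each: the standing assumption $p_n\ge2$ (needed both for $h_{n+1}\ge2h_n$ and for the statement to be nondegenerate), and the justification that $\widehat{\alpha}(m)$ equals the finite sum over representations --- your separation estimate bounds the top active scale of any representation of a fixed $m$, hence the Fourier coefficients of the partial products stabilize and weak convergence passes the formula to the limit measure $\alpha$.
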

The sequence $(m_j)$ is given exactly by 
$$m_j=h_{n_{j+1}}-h_{n_j}-s(n_j,p_{n_j}).$$ 
We are now able to give the proof of Theorem \ref{Bourgain}.
\noindent \begin{proof}[\textbf{Proof of Theorem \ref{Bourgain}}] By our assumption 
	\[
	\sum_{n=1}^{+\infty}\frac1{p_n^2}=+\infty.
	\]
	Therefore  there is $\eta \in \{0,1,2\}$ such that 
	
	\[
	\sum_{n \equiv \eta \mod 3}\frac1{p_n^2}=+\infty.
	\]
	Let us put $n_j=3j+\eta$. Then $n_{j+1}=n_j+3$. Now, by appealing to Lemma \ref{Karine}, the generalized Riesz product 
	$$d\alpha=\ds \prod_{j=0}^{\infty}\big|P_{n_j}\big|^2,$$
	satisfy  
	\begin{enumerate}
		\item $\widehat{\alpha}(\pm m_j)=\frac{1}{p_{n_j}},$
		\item $\widehat{\alpha}(m_j\pm m_k)=\widehat{\alpha}(m_j) \widehat{\alpha}(m_k)$, $\forall j \neq k$. 
	\end{enumerate}
	where $m_j=h_{n_{j+1}}-h_{n_j}-s(n_j,p_{n_j}).$ Assume now that for infinity many primes $q$, $m_j \not \equiv 0$, for infinity many $j$. Then, for infinity many primes $p \neq q$, we have 
	$$\widehat{\alpha_{(q)}}(pm_j \pm pm_k)=\widehat{\alpha_{(q)}}(pm_j)=\widehat{\alpha_{(q)}}(pm_k)=0.$$
	We have also
	\begin{enumerate}
		\item $\widehat{\alpha_{(p)}}(\pm p m_j)=\frac{1}{p_{n_j}},$
		\item $\widehat{\alpha_{(p)}}(p m_j\pm p m_k)=\widehat{\alpha_{(p)}}(pm_j) \widehat{\alpha_{(p)}}(pm_k)$, $\forall j \neq k$. 
	\end{enumerate}
	Whence, by taking $\phi_{n}(z)=z^{p m_n}$, we get 
	\begin{align}
	&\sum_{ k \geq 0} \sup_{n \in \N^*}\Big\{\Big|\int \phi_{n}(x) \phi_{n+k}(x) d\alpha_{(p)}- 
	\int \phi_{n}(x) d\alpha_{(p)} \int \phi_{n+k}(x) d\alpha_{(p)}\Big|\Big\}<\infty. \\
	&\sum_{ k \geq 0} \sup_{n \in \N^*}\Big\{\Big|\int \phi_{n}(x) \phi_{n+k}(x) d\alpha_{(q)}- 
	\int \phi_{n}(x) d\alpha_{(q)} \int \phi_{n+k}(x) d\alpha_{(q)}\Big|\Big\}<\infty.\\
	& \sum_{ n = 1}^{+\infty}\Big|\int \phi_{n}(x)  d\alpha_{(p)}-\int \phi_{n}(x)  d\alpha_{(q)}\Big|^2=+\infty. 
	\end{align}
	Whence 
	$$\frac{d\alpha_{(p)}}{d\alpha_{(q)}}=0.$$
	Applying the similar arguments as in Corollary 5.3 in \cite{elabdal-Nad}, we conclude that 
	$$\frac{d\mu_{(p)}}{d\mu_{(q)}}=0.$$
	That is, for infinitely many primes $p \neq q,$ we have 
	$$H(\mu_{(p)},\mu_{(q)})=0.$$
	This completes the proof of the theorem.
\end{proof}
In the next subsection, we present the arguments based on the disjointedness of the powers.

\paragraph{\bf M\"obius disjointness and spectral disjointness of the powers.}
As before, let us observe that we have the following 
\[
\Big| \frac1{N}\sum_{j=1}^{N}f(T^{pn}x)f(T^{qn}x) \Big| \leq
\bigintss_{\T}  \Big|\frac{1}{\sqrt{N}}\sum_{j=1}^{N}f(T^{pn}x) e^{i n \theta}\Big|
\Big|\frac{1}{\sqrt{N}}\sum_{j=1}^{N}f(T^{qn}x) e^{i n \theta}\Big| d\theta,
\]
Hence,
\[
\Big| \frac1{N}\sum_{j=1}^{N}f(T^{pn}x)f(T^{qn}x) \Big| \leq
\frac{N}{N}H(\sigma_{f,p,N},\sigma_{f,q,N})
\]
Which means that
\[
\Big| \frac1{N}\sum_{j=0}^{N-1}f(T^{pn}x)f(T^{qn}x) \Big| \leq
H(\sigma_{f,T^p,N},\sigma_{f,T^q,N})
\]
and, by Coquet-Mand\'es-France-Kamae lemma \ref{CoquetMFK}, we deduce that we have
\[
\limsup\Big| \frac1{N}\sum_{j=0}^{N-1}f(T^{pn}x)f(T^{qn}x) \Big| \leq
\limsup H(\sigma_{f,T^p,N};\sigma_{f,T^q,N}) \leq H(\sigma_{f,T^p}
;\sigma_{f,T^q}).
\]
At this point, we notice that if $T^p$ and $T^q$ are spectrally disjoint for infinitely many $p\neq q$ then $T$ has singular spectrum (if not, $T$ has an absolutely continuous component then $T^m$ has a Lebesgue component for $m\geq m_0$). It follows that the M\"{o}bius disjointness holds by el Abdalaoui-Nerurkar's recent result \cite{AM}.\\

\paragraph{\bf Remark.} 
Using our language, let us state the some known results in the situation of uniquely ergodic topological model of finite rank-one maps $(X,\A,T,P)$ ($P$ is a probability measure) where all the parameters are uniformly bounded. We have to be careful here since the symbolic dynamical system in the case of rank-one maps with spacers added on the last column is not uniquely ergodic. The proof given in \cite{elabdal-lem-de-la-rue} covers this latter case. 
We would like to add also that Bourgain's method and therefore our proofs of Theorem \ref{Bourgain} apply to infinite symbolic rank-one maps for which the M\"{o}bius disjointness holds. In fact, the proof gives that if  $f$ be a continuous function with zero mean (i.e. $\displaystyle \int f(y) d\Pro=0$) then 
$$\limsup H(\sigma_{f,(p),N},\sigma_{f,(q),N})=0.$$
Later, el Abdalaoui-Lema\'nczyk-de-la-Rue \cite{elabdal-lem-de-la-rue} proved that for the case of some class of finite rank-one maps which include the case of of finite rank-one maps with bounded parameters we have, for a large prime $p \neq q$, 
\begin{align}
H(\sigma_{f,T^p},\sigma_{f,T^q})=0.
\end{align}
The proof is based on the computation of the weak-limit of the maps.

In the same year, V. V. Ryzhikov \cite{Ryzhikov} established  that for the finite weak-mixing rank-one maps  with bounded parameters  we have the following weaker version:
for a large prime $p \neq q$, 
\begin{align}
\limsup\Big| \frac1{N}\sum_{j=0}^{N-1}f(T^{pn}x)f(T^{qn}x) \Big| =0.
\end{align}
This follows from the fact that those maps have a minimal self-joinings.



\begin{thank}
The first author gratefully thanks Rutgers University for an invitation to visit in 2018 and he would like to thank Mahesh Nerurkar for many discussions and Zoom-discussion on the subjects. The second-named author would like to thank the University of Rouen Normandy for an invitation to visit in 2017 when the work on this paper was started. 
\end{thank}


\begin{thebibliography}{99-}
\bibitem{Aa}
J. Aaronson, An introduction to infinite ergodic theory, Mathematical surveys and
monographs 50, American Mathematical Society, Providence, R.I, U.S., 1997.

\bibitem{AM}
 e. H. el Abdalaoui and M. Nerurkar, \emph{Sarnak's M\"{o}bius disjointness for dynamical systems with singular spectrum and dissection of Möbius flow,} arXiv:2006.07646 [math.DS].

\bibitem{elabdal-lem-de-la-rue}
E. H. e. Abdalaoui, M. Lema\'nczyk and T. de la Rue, \emph{On spectral disjointness of powers for rank-one transformations and M\"obius orthogonality},
J. Funct. Anal., 266 (2014), pp. 284-317.
 http://arxiv.org/abs/1301.0134.

\bibitem{elabdal-Nad}
E. H. ~el Abdalaoui \& M. Nadkarni, \emph{Calculus of generalized Riesz products}, Recent trends in ergodic theory and dynamical systems, 145-180, Contemp. Math., 631, Amer. Math. Soc., Providence, RI, 2015. 

\bibitem{elabdal-Silva}
E.H. el Abdalaoui and C. E. Silva, M\"{o}bius random law and infinite
rank one maps, AMS Special Session on Ergodic Theory and Dynamical Systems–to Celebrate the Work of Jane Hawkins, January 12, 2018.
 
 \bibitem{Ap}
T.~M. Apostol.
\newblock {\em Introduction to Analytic Number Theory}.
\newblock Springer-Verlag, New York-Heidelberg, 1976.
\newblock Undergraduate Texts in Mathematics.
 
 \bibitem{B}J.\ Bourgain, {\em On the correlation of the M\"{o}bius function with random rank-one systems},  J. Anal. Math. 120 (2013), 105–130.
 
 \bibitem{Bourgain-ergodic} 
 J.\ Bourgain, {\em An approach to pointwise ergodic theorems}, Geometric aspects of functional analysis (1986/87), 204–223, Lecture Notes in Math., 1317, Springer, Berlin, 1988.
 
\bibitem {BIsr}
J. ~Bourgain, {\em On the spectral type of Ornstein class one transformations, }{\it Isr. J. Math. },{\bf 84}(1993), 53-63.


\bibitem{Brown-Hewitt}
G. ~Brown, and E. ~ Hewitt, {\em Some new singular Fourier-Stieltjes series,} Proc. Nat. Acad. Sci. U.S.A. 75 (1978), no. 11, 5268-5269.
 
\bibitem{BSZ} J. Bourgain, P. Sarnak, and T. Ziegler, \emph{Disjointness of Mobius from horocycle flow}, From Fourier Analysis and Number Theory to Radon Transforms and Geometry, Springer, New York, 2013, pp. 67-83.

\bibitem{Brown}
G. Brown \& E. Hewitt, Continuous singular measures equivalent to their convolution squares. Math. Proc. Cambridge Philos. Soc. 80 (1976), no. 2, 249-268. 
 
\bibitem{Brown-Moran}
G. Brown \& W. Moran, On orthogonality of Riesz products, Proc. Cambridge Philos. Soc., 76 (1974), 173-181.
 
 \bibitem {CN}
 J. ~R. ~Choksi ~and ~M. ~G. ~Nadkarni , {\em The maximal spectral 	type of rank-one transformation, } {\it Can. Math. Bull., } {\bf 37 (1)} (1994), 29-36.

\bibitem{CMFK}
 J. Coquet, T. Kamae, M. Mend\`es-France, Sur la mesure spectrale de certaines suites arithm\'etiques, Bull. Soc. Math. France, 105 (1977), 369–384.

\bibitem{Sasha}
A. Danilenko,   Strong orbit equivalence of locally compact Cantor minimal systems,
Internat. J. Math. 12 (2001), no. 1, 113-123.
 
\bibitem{Da}
H.~Davenport.
\newblock \emph{On some infinite series involving arithmetical functions {II}.}
\newblock Quart. J. Math. Oxford \textbf{8} (1937), 313--320.

\bibitem{Gao-Hill}
 S. Gao \& A. Hill, {\em Bounded rank-1 transformations,} J. Anal. Math. 129 (2016), 341–365.
 
 \bibitem{Host-mela-parreau}
 B.~ Host, J-F. M\'ela and F.~ Parreau, {\em 	Nonsingular transformations and spectral analysis of measures,} Bull. Soc. Math. France 119 (1991), no. 1, 33-90.

\bibitem{od-te}	
A. M. Odlyzko and H. J. J. te Riele, {\em Disproof of the Mertens conjecture,}  
J. Reine Angew. Math. 357 (1985), 138-160.
	
\bibitem{P}
W. Parry, \emph{Topics in ergodic theory,} Reprint of the 1981 original. Cambridge Tracts in Mathematics, 75. Cambridge University Press, Cambridge, 2004.

\bibitem{Parreau}
F. Parreau. Ergodicit\'e et puret\'e des produits de Riesz. Ann. Inst. Fourier, 40 :391-405, 1990.

\bibitem{Peyriere}
J. ~Peyri\`ere, {\it \'Etude de quelques propri\'et\'es des produits de Riesz}, Ann.\ Inst.\ Fourier, Grenoble \textbf{25}, 2 (1975), 127--169.

\bibitem{Kalikow} 
S.A. Kalikow, {\em Twofold mixing implies threefold mixing for rank-one transformation}, Ergod. Th. \& Dynam. Sys.  {\bf 4} (1984), 237--259.

\bibitem{Kat}  Katznelson, {\em An Introduction to Harmonic Analysis}, Dover Publications, Inc., New York, 1976.

\bibitem{KS}
S.~J.~ Kilmer  and S.~ Saeki , On Riesz Product measures : mutual absolute continuity and singularity, Ann. Inst. Fourier, t. 38 2, 1988, p. 63-69.

\bibitem {KR}
I. ~Klemes \& ~K. ~Reinhold, {\em rank-one transformations with	singular spectre type, } {\it Isr. J. Math., }{\bf 98} (1997), 1-14.


\bibitem{Qu}M.\ Queff\'elec, {\em Substitution Dynamical Systems - Spectral Analysis}. Second edition.
 Lecture Notes in Mathematics, 1294. Springer-Verlag, Berlin, 2010. 

\bibitem{Nad} 
M. Nadkarni,  Spectral theory of dynamical systems. Second edition. 
Texts and Readings in Mathematics. Springer, Singapore; Hindustan Book Agency, New Delhi, [2020], 2020.

\bibitem{Ritter}
G. Ritter, On Kakutani's theorem for infinite products of not necessarily independentfunctions, Math. Ann., Vol. 239, 1979, pp. 35-53.

\bibitem{Ryzhikov} 
V. V. Ryzhikov, {\em Minimal Self-joinings, bounded constructions, and weak closure of ergodic actions,} arXiv:1212.2602v2 [math DS].

\bibitem{S}
P.~Sarnak.
\newblock Three lectures on the {M}{\"o}bius function, randomness and dynamics.
\newblock \url{http://publications.ias.edu/sarnak/} (2010).

\bibitem{Titchmarsh}
E.~C. Titchmarsh.
\newblock {\em The theory of the {R}iemann zeta-function}.
\newblock The Clarendon Press Oxford University Press, New York, second
  edition, 1986.
\newblock Edited and with a preface by D. R. Heath-Brown.

\bibitem {Zygmund}
A. ~Zygmund, {\it Trigonometric series {\rm vol. II}}, second ed., Cambridge Univ. Press, Cambridge, 1959.

\end{thebibliography}
\end{document}